\theoremstyle{plain}
\numberwithin{equation}{section} \numberwithin{figure}{section}
\newtheorem{theorem}{Theorem}[section]
\newtheorem{lemma}[theorem]{Lemma}
\newtheorem{proposition}[theorem]{Proposition}
\newtheorem{definition}[theorem]{Definition}
\theoremstyle{definition}
\newtheorem{remark}[theorem]{Remark}
\begin{document}

\markboth{Pablo Ochoa and Julio Alejo Ruiz}
{Singular problems in Carnot groups}

%

%

\title[Singular problems in Carnot groups]{Solving singular evolution problems in sub-Riemannian groups via deterministic games}

\author{Pablo Ochoa and Julio Alejo Ruiz}

\address{Universidad Nacional  de Cuyo-CONICET, Parque Gral. San Mart\'in\\
Mendoza, 5500, Argentina. \\
ocho\-pablo@gmail.com}

\address{Universidad Nacional  de Cuyo-CONICET, Parque Gral. San Mart\'in\\
Mendoza, 5500, Argentina.\\
julioalejoruiz@gmail.com}
\subjclass{35R03, 49L25, 49N70}
\keywords{Partial differential equations on Carnot group; viscosity solutions; differential games.}
\maketitle

\begin{abstract}
In this manuscript, we prove existence of viscosity solutions to singular parabolic equations in Carnot groups. We develop the  analysis by constructing appropriate deterministic games adapted to the algebraic and differential structures of Carnot groups. We point out that the proof of existence does not require comparison and it is based on an Arzela-Ascoli-type theorem. 
\end{abstract}

\section{Introduction}
In the last decades, there has been a special interest in the study of partial diffe\-rential equations in non-Euclidean frameworks. In this work, we shall be concerned with the development of  existence  results for  viscosity solutions of singular parabolic equations in Carnot groups via two-person deterministic games.

To motivate and comment about the main results,  consider a family of  surfaces $M_t \subset \mathbb{R}^{N}$, $t \geq 0$, given as the zero-level set of a function $u$:
$$M_t = \Big\{p \in \mathbb{R}^{N}: u(t, p)=0\Big\}.$$
If we are interested in the movement of $M_t$ by horizontal mean curvature,  then the function $u$  is a solution of the PDE:

\begin{equation}\label{MCFEn}
u_t(t, p)= \sum_{i,j=1}^{m_1}\Big( \delta_{ij}- \frac{X_iuX_ju}{\sum_{i=1}^{m_1}(X_iu)^{2}}\Big)X_iX_ju.
 \end{equation}This model is known as the horizontal mean curvature flow equation (see \cite{G} and  \cite{CGG}  for a derivation in Euclidean spaces, and  \cite{CC} and \cite{CD} for the corresponding discussion in  Carnot groups). 
The main result  of the paper is Theorem \ref{existenceb} where we establish the existence of viscosity solution to \eqref{MCFEn} by employing two-person deterministic games.  We point out that our result apply to a large  class of singular equations including \eqref{MCFEn}.

Existence and comparison results for Carnot groups are not largely availa\-ble compared to the Euclidean framework. We  highlight that even do one may write equations in Carnot groups in terms of the Euclidean gradient and Hessian of the unknown and try to apply the Euclidean theory (for instance from \cite{CIL}), this does not always work. Indeed, the above procedure may introduce degenerate points in the equations or may causes the loss of appropriate continuity. A classical example is the following: consider the sub-elliptic Laplacian operator in the Heisenberg group
$$\Delta_0:=X_1^{2}+X_2^{2}$$where $X_1$ and $X_2$ are given as in \eqref{frame H}.  In terms of Euclidean derivatives, the sub-elliptic Laplace equation may be written as:
$$-\textnormal{tr}\left( \begin{pmatrix}
1 & 0 & 2y \\
0 & 1 & -2x \\
2y & -2x & 4(x^{2}+y^{2})
\end{pmatrix}\nabla^{2}u \right)=0.$$Observe that the matrix:
$$\begin{pmatrix}
1 & 0 & 2y \\
0 & 1 & -2x \\
2y & -2x & 4(x^{2}+y^{2})
\end{pmatrix}$$is not uniformly elliptic for all $(x, y, z)$.  The interested reader may consult  \cite[Remark 1]{OR} where we exhibited an example for which the operator in terms of the Euclidean derivatives does not satisfies well-known assumptions on uniform continuity needed and largely used in the theory of Euclidean viscosity solutions. However, the comparison derived intrinsically in the sub-Rimannian setting in \cite{OR} applied to the given example.  We also refer the reader to Remark \ref{remar euc} in the present article.

 In sub-Riemannian structures, the singularity of the equation  may appear at points where part of the gradient (usually the so-called horizontal gradient) vanishes. This is not the case of the Euclidean context, where the singularity comes from points where the full gradient vanishes. This facet of sub-Riemannian structures yields complications in the study of singular equations.  In spite of these facts, there are some interested findings in Carnot groups.  For elliptic and uniformly elliptic equations in the Heisenberg group $\mathbb{H}$, comparison results are given in \cite{M}, based on a sub-elliptic version of the Euclidean Crandall-Ishii Lemma (see  \cite{CIL} for details). At this point we  quote the works \cite{Bardi-M} and \cite{Manucci} where the authors propose various form of partial nondegeneracy to weaken the uniform ellipticity assumption and  apply their results to some sub-elliptic second order equations.  Also,  in \cite{Bi}, results related to  infinite harmonic functions in the Heisenberg group were esta\-blished.  In the general case of Carnot groups, we find the work \cite{Bi3} for the p-Laplacian operator.  In the setting of vector fields in $\mathbb{R}^{N}$ (with its standard group structure), we  refer the reader to the paper \cite{BBM}. Regarding parabolic equations in Carnot groups, we mention \cite{Bi2}, where comparison results for \textit{admissible}  operators in the Heisenberg group where obtained. The word \textit{admissible} refers to continuous and proper ope\-rators $\mathcal{F}=\mathcal{F}(t, p, u, \eta, \mathcal{X}
)$ which satisfy the following: for each $t \in [0,T]$, there is a modulus of continuity $\omega: [0,\infty]\to [0,\infty]$ so that
\begin{eqnarray*}
\mathcal{F}(t, q,r, \tau \eta,\mathcal{Y})- \mathcal{F}(t,p,r, \tau \eta,\mathcal{X}) \leq \omega \big( d_C(p,q)+ \tau \|\eta\|^{2}+ \|\mathcal{X}-\mathcal{Y}\|\big),
\end{eqnarray*}where $\tau >0$. Hence, the results are not valid for singular equations. In \cite{EM}, the author provides existence and uniqueness results for the Gauss curvature flow equation of graph in Carnot group in unbounded domains, generalizing the available results in the literature. For singular equations, in \cite{Bi1} it was covered the case of the parabolic p-Laplacian, but the structure of this equation in largely used in the derivation of the uniqueness principle. A remarkable progress was done in \cite{FLM1}, where the authors proved existence and a comparison principle (needed for existence) for radially symmetric viscosity solutions for the horizontal mean curvature flow equation in the first-order Heisenberg group. The symmetry refers to solutions $u= u(t,p_1,p_2,p_3)$ for which it holds:
\begin{eqnarray*}
u(t,p_1,p_2,p_3) = u(t, \tilde{p}_1,\tilde{p}_2, p_3) \mbox{ whenever }p^{2}_1+p_2^{2}= \tilde{p}_1^{2}+\tilde{p}_2^{2}.
\end{eqnarray*}

In \cite{FLM1}, the existence of viscosity solutions is obtained via deterministic games. Here, we extend the findings of  \cite{FLM1} to more general singular parabolic equations, by adapting the reasoning of \cite{KS} and \cite{Kasai} to the structure of Carnot groups. We point out that, unlike \cite{FLM1}, our existence result does not require comparison and it is based on an Arzela-Ascoli-type theorem from \cite{MPR}.  As a final comment, we mention that the uniqueness of the solutions constructed here constitutes an open problem.

The organization of the paper is as follows. In Section 2 we introduce the type of equations we study in the paper and the main result concerning exitence of solutions. In the end of the section, we  provide some applications of our results. In Section 3 we shall introduce the necessary background and notation on Carnot groups as well as the notion of viscosity solutions by means of parabolic jets. In the next Section 4, we provide the proof of the existence result. We close the paper  with an Appendix where we prove a technical lemma needed in the analysis of the deterministic games.

\section{Main Result of the paper and assumptions}

In this work, existence of solutions is discussed for initial-value problems of the form:

\begin{equation}\label{problem}
  \left\lbrace
  \begin{array}{l}
       u_t + \mu u + \mathcal{F}(t, p, \nabla_{\mathcal{G}, 0}u, \nabla^{2, *}_{\mathcal{G}, 0}u) =0, \textnormal{ on } (0, T) \times \mathbb{G}, \\
   \qquad \qquad \qquad \qquad \qquad \,\,\,  u(0, p) = \psi(p), \textnormal{ with } p \in \mathbb{G}.\\
  \end{array}
  \right.
\end{equation}Here,  $T> 0$ is fixed   and  $\mu \geq 0$ is a parameter. We kindly refer the reader to Section \ref{prelims}  for notation and basic definitions involving Carnot groups. We assume that $\psi$ satisfies:

 \begin{itemize}
\item[(E)] $\psi  \in BC(\mathbb{G})$ and for each $\delta >0$ there are smooth approximations $\psi^{+}_\delta, \psi^{-}_\delta$ with bounded right and left-invariant horizontal derivatives of first and second order  so that:
$$\psi-\delta \leq \psi^{-}_\delta \leq \psi \leq  \psi^{+}_\delta \leq \psi+\delta.$$ 
\end{itemize}In addition, the operator $\mathcal{F}$ satisfies the following assumptions: 

\begin{enumerate}

    \item[\textbf{($\mathcal{F}1$)}] $\mathcal{F} : [0, T]\times \mathbb{G}\times \left(\mathbb{R}^{m_1} \backslash \lbrace 0 \rbrace\right) \times \mathcal{S}^{m_1}(\mathbb{R}) \rightarrow \mathbb{R}$ is continuous.
    
    \item[\textbf{($\mathcal{F}2$)}] $\lambda_0:=\sup\limits_{\eta} \vert \mathcal{F}(t, p, \eta, \mathcal{O}) \vert < \infty$.
    
    \item[\textbf{($\mathcal{F}3$)}] There exists a positive constant $\lambda_1$ such that:
    \begin{equation*}
        \mathcal{F}(t, p, \eta, \mathcal{X}) - \mathcal{F}(t, p, \eta, \hat{\mathcal{X}}) \leq \dfrac{\lambda_1^2}{2} \mathcal{E}^+(\hat{\mathcal{X}} - \mathcal{X}),    \end{equation*}where:
        $$\mathcal{E}^+(\mathcal{X}):= \max\left\lbrace 0, \text{ maximum eigenvalue of }\mathcal{X}\right\rbrace,$$and there exists a modulus of continuity $\omega=\omega(r)=O(r)$ as $r \to 0^{+}$ such that:
\begin{eqnarray}
\mathcal{F}(t,p,\eta, \mathcal{X})- \mathcal{F}(s,q,\eta,  \mathcal{X}) \leq \omega\big( |s-t|+ |p \cdot q^{-1}|_\mathbb{G}\big), 
\end{eqnarray}for all $(t,p), (s,q)\in [0, T]\times \mathbb{G}$,  $\eta \in \mathbb{R}^{m_1}\setminus \{0\}$,  and $\mathcal{X}\in S^{m_1}(\mathbb{R})$.
   
    \item[\textbf{($\mathcal{F}4$)}] For any $r, R > 0$, there exists a modulus of continuity $\omega_{r, R}$ such that:
    \begin{equation*}
        \mathcal{F}(t, p, \hat{\eta}, \mathcal{X}) - \mathcal{F}(t, p, \eta, \mathcal{X}) \leq \omega_{r, R}(\Vert \hat{\eta} - \eta \Vert), \textnormal{ if } \Vert \hat{\eta} \Vert, \Vert \eta \Vert \geq r, \,\Vert \mathcal{X} \Vert \leq R. 
    \end{equation*}
    
    \item[\textbf{($\mathcal{F}5$)}] $ \mathcal{F}_*(t, p, 0, \mathcal{O}) = \mathcal{F}^*(t, p, 0, \mathcal{O}) =0$.
\end{enumerate}

The main theorem of the paper is the following existence result.

 \begin{theorem}\label{existenceb} Assume (E) and $(\mathcal{F}1)-(\mathcal{F}5)$.  Then there is a  viscosity solution $u \in BUC([0, T] \times \mathbb{G})$ to \eqref{problem}.
 \end{theorem}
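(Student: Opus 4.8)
The plan is to adapt the deterministic two-person game method of Kohn--Serfaty \cite{KS} and Kasai \cite{Kasai}, already employed in \cite{FLM1} for the horizontal mean curvature flow in the first Heisenberg group, to the operator $\mathcal{F}$ satisfying $(\mathcal{F}1)-(\mathcal{F}5)$ and to a general Carnot group $\mathbb{G}$. Fix a step size $\varepsilon>0$. From a state $(t,p)\in(0,T]\times\mathbb{G}$ the minimizing player (Helen) announces a unit vector $\eta\in\mathbb{R}^{m_1}$ together with an admissible matrix $\mathcal{X}\in\mathcal{S}^{m_1}(\mathbb{R})$ whose size is controlled by the Pucci-type constant $\lambda_1$ from $(\mathcal{F}3)$; the maximizing player (Mark) responds with a sign $b\in\{-1,1\}$ and possibly a direction in the hyperplane $\eta^{\perp}$; the time decreases to $t-\varepsilon^{2}$, the token is moved by the group law along the horizontal frame, schematically
\[
p \ \longmapsto\ p\cdot\exp_{\mathbb{G}}\!\Big(\sqrt{2}\,\varepsilon\,\lambda_1\, b\, w(\eta,\mathcal{X})+\varepsilon^{2}\,z(\eta,\mathcal{X})\Big),
\]
a running cost $\varepsilon^{2}\mathcal{F}(t,p,\eta,\mathcal{O})$ (finite, uniformly in $\eta$, by $(\mathcal{F}2)$) is incurred, and a discount factor $(1+\mu\varepsilon^{2})^{-1}$ handles the zeroth-order term $\mu u$; once $t\le 0$ the play stops and Helen pays $\psi$ at the terminal position. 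I would then define $u^{\varepsilon}(t,p)$ to be the value of this game; it satisfies a dynamic programming principle, and a Taylor expansion of a smooth function along the exponential map of $\mathbb{G}$ — using the Baker--Campbell--Hausdorff formula and the homogeneous structure generated by the horizontal fields — shows that the resulting $\inf\sup$ is consistent, to order $\varepsilon^{2}$, with $u_t+\mu u+\mathcal{F}(t,p,\nabla_{\mathcal{G},0}u,\nabla^{2,*}_{\mathcal{G},0}u)$: the choice of $\eta$ reproduces the horizontal gradient and the choices of $(\mathcal{X},b,w,z)$ reproduce, through the one-sided bounds $(\mathcal{F}3)-(\mathcal{F}4)$, the dependence on the symmetrized horizontal Hessian.

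Next I would establish a priori estimates. A uniform bound $\|u^{\varepsilon}\|_{L^{\infty}}\le e^{\mu T}\big(\|\psi\|_{L^{\infty}}+\lambda_0 T\big)$ is immediate from $(\mathcal{F}2)$ and $\psi\in BC(\mathbb{G})$. The delicate point for a \emph{singular} equation is the modulus of continuity, since no uniform spatial or temporal modulus is granted a priori; here assumption (E) enters. Running the same game from the \emph{smooth} initial payoffs $\psi^{\pm}_{\delta}$ — which have bounded right- and left-invariant horizontal derivatives of orders one and two — one builds explicit super- and sub-barriers of the form $\psi^{\pm}_{\delta}(\text{terminal position})\pm C_{\delta}(t)$ and checks, via the DPP and the bounds $(\mathcal{F}2)-(\mathcal{F}3)$, that the corresponding game values trap $u^{\varepsilon}$ between them up to an error $O(\delta)+O(\varepsilon)$. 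This controls the oscillation of $u^{\varepsilon}$ near $t=0$ and, together with a comparison of the game started at two nearby states, the behaviour of its increments elsewhere; the resulting family is compact in the sense of the Arzel\`a--Ascoli-type theorem of \cite{MPR}, which applies precisely when equicontinuity is only asymptotic in $\varepsilon$, yielding a subsequence $u^{\varepsilon_k}\to u$ locally uniformly on $[0,T]\times\mathbb{G}$ with $u\in BUC([0,T]\times\mathbb{G})$.

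It then remains to identify $u$ as a viscosity solution of \eqref{problem} in the parabolic-jet sense of Section \ref{prelims}. For the subsolution property, let $\phi$ be smooth with $u-\phi$ attaining a strict local maximum at $(t_0,p_0)$. If $\nabla_{\mathcal{G},0}\phi(t_0,p_0)\neq0$, local uniform convergence produces maxima $(t_k,p_k)\to(t_0,p_0)$ of $u^{\varepsilon_k}-\phi$; evaluating the DPP at $(t_k,p_k)$ with Helen playing $\eta$ near $\nabla_{\mathcal{G},0}\phi/\|\nabla_{\mathcal{G},0}\phi\|$ and $\mathcal{X}$ near $\nabla^{2,*}_{\mathcal{G},0}\phi$, then Taylor-expanding in exponential coordinates and using $(\mathcal{F}1)$, $(\mathcal{F}3)$, $(\mathcal{F}4)$ to absorb the $o(\varepsilon^{2})$ errors, gives $\phi_t+\mu\phi+\mathcal{F}(t_0,p_0,\nabla_{\mathcal{G},0}\phi,\nabla^{2,*}_{\mathcal{G},0}\phi)\le0$ in the limit $k\to\infty$; if instead $\nabla_{\mathcal{G},0}\phi(t_0,p_0)=0$ the inequality holds automatically by $(\mathcal{F}5)$ — this is where the relaxed envelopes $\mathcal{F}_*,\mathcal{F}^*$ enter. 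The supersolution property is obtained symmetrically, exchanging the roles of the two players. Finally, $u(0,\cdot)=\psi$ follows from the barrier sandwich of the previous paragraph together with $|\psi^{\pm}_{\delta}-\psi|\le\delta$ on letting $\delta\to0$.

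I expect the main obstacle to be the consistency step in the sub-Riemannian setting: arranging the admissible moves so that the second-order Taylor expansion of the DPP, carried out along the group exponential rather than along Euclidean segments, reproduces \emph{exactly} the symmetrized horizontal Hessian $\nabla^{2,*}_{\mathcal{G},0}\phi$ and the horizontal gradient. This forces a careful bookkeeping of the group law, of the interplay between left- and right-invariant horizontal frames, and of the higher-order Baker--Campbell--Hausdorff corrections, and is presumably the content of the technical lemma deferred to the Appendix. The singular regime $\nabla_{\mathcal{G},0}\phi=0$ is handled cheaply by $(\mathcal{F}5)$, but keeping Helen's admissible pairs $(\eta,\mathcal{X})$ consistent as the test gradient degenerates — so that the scheme does not manufacture the kind of spurious singularity illustrated by the Heisenberg sub-Laplacian in the Introduction — is the other place demanding care.
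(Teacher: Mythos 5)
Your proposal shares the high-level architecture of the paper's proof (two-person deterministic game, dynamic programming principle, a priori estimates via the smooth approximations $\psi^\pm_\delta$, compactness via the Arzel\`a--Ascoli-type result of \cite{MPR}, then a consistency argument), but the concrete game you describe is not the one that works for a general $\mathcal{F}$, and one step of your consistency argument is genuinely incorrect.

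First, the game. You have Helen announce a unit vector $\eta$ and a matrix, Mark reply with a sign $b$ and possibly a direction in $\eta^\perp$, the token move by $p\mapsto p\cdot\exp_{\mathbb{G}}(\sqrt2\,\varepsilon\lambda_1 b\,w+\varepsilon^2 z)$, and a running cost $\varepsilon^2\mathcal{F}(t,p,\eta,\mathcal{O})$. That is the shape of the original Kohn--Serfaty game for curvature flows and some special operators, not the ``penalized'' game needed for a general $(\mathcal{F}1)$--$(\mathcal{F}5)$ operator. In the paper, Player I chooses $(\eta,\mathcal{X})$ with $\|\eta\|\le\varepsilon^{-1/4}$, $\|\mathcal{X}\|\le\varepsilon^{-1/2}$ (not a unit vector), Player II chooses a \emph{horizontal} $q=(\nu,0)$ with $|q|_{\mathbb{G}}\le\varepsilon^{-1/4}$, the token moves by the dilation $p\mapsto p\cdot\delta_\varepsilon(q)$, and the running cost is
\[
R^{\varepsilon}(t,p,q,\eta,\mathcal{X})=-\varepsilon\langle\eta,\nu\rangle-\tfrac{\varepsilon^2}{2}\langle\mathcal{X}\nu,\nu\rangle-\varepsilon^2\mathcal{F}(t,p,\eta,\mathcal{X}).
\]
The two extra linear and quadratic terms are what cancel against the stratified Taylor expansion when Player I reports the true horizontal gradient and symmetrized horizontal Hessian of a test function, and are precisely what makes the scheme consistent with an arbitrary admissible $\mathcal{F}$; with your running cost $\varepsilon^2\mathcal{F}(t,p,\eta,\mathcal{O})$ and exponential move, the $\inf\sup$ does not in general reproduce $u_t+\mu u+\mathcal{F}(t,p,\nabla_{\mathcal{G},0}u,\nabla^{2,*}_{\mathcal{G},0}u)$. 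The technical Appendix lemma of the paper (Lemma~\ref{lemmafundamental}) is exactly the estimate on $R^\varepsilon$ that lets Player II choose a $\overline q$ making the penalization do its job; it is not merely bookkeeping of BCH corrections, as you suggest.

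Second, the degenerate case. You assert that when $\nabla_{\mathcal{G},0}\phi(t_0,p_0)=0$ the subsolution inequality ``holds automatically by $(\mathcal{F}5)$''. This is false: $(\mathcal{F}5)$ only normalizes $\mathcal{F}_*(t,p,0,\mathcal{O})=\mathcal{F}^*(t,p,0,\mathcal{O})=0$, while the viscosity inequality at a degenerate point still involves $\mathcal{F}_*(t,p,0,\nabla^{2,*}_{\mathcal{G},0}\phi)$ with a generally nonzero Hessian. For the horizontal mean curvature flow operator, for instance, $\mathcal{F}_*(t,p,0,\mathcal{X})$ is a nontrivial extremum over unit directions $n$ of $-\mathrm{tr}[(I-n\otimes n)\mathcal{X}]$. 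Either you invoke an Ishii--Souganidis-type reduction to radial test functions (which you do not state and which would itself require verification in the Carnot framework), or you must verify the inequality via the game as the paper does (Part~2 of Lemma~\ref{lemmafundamental}, which supplies a choice of $\overline q$ giving $R^\varepsilon\ge R^{*,\varepsilon}(t,p,\overline q,0,\hat{\mathcal{X}})$ with the Hessian still present).

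Third, a smaller point: your consistency argument proceeds by locating actual maxima of $u^{\varepsilon_k}-\phi$ converging to $(t_0,p_0)$, Barles--Souganidis style. The value functions $u^\varepsilon$ in this construction are piecewise defined in $t$ and need not be continuous, so these maxima need not exist; the paper instead runs an iterative ``escape'' argument from the DPP, working with the upper/lower envelopes $(u^\varepsilon)^*, (u^\varepsilon)_*$ and a strict extremum of the limit $u-\varphi$. Your route can be repaired (use near-maxima and the uniform convergence), but as stated it has a gap.
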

 
 \
 
\begin{remark} Assumption (E) on the datum may be obtained for $\psi \in BC(\mathbb{G})$ which are constant outside a compact set. That is the setting of \cite{FLM1}.
\end{remark} 

\begin{remark}\label{remar euc} In the text, assumptions ($\mathcal{F}3$) and ($\mathcal{F}4$) will be usually  applied to:
$$\eta=\nabla_{\mathcal{G}, 0}\phi(p) \quad \text{ and }\quad \mathcal{X}=\nabla^{2, *}_{\mathcal{G}, 0}\phi (p),$$for some smooth $\phi$ (we do not write the dependence on $t$). In terms of Euclidean derivatives, there are matrix fields $A$ and $M$ (see \cite[Lemma 3.2]{Bi1}) so that:
$$\nabla_{\mathcal{G}, 0}\phi(p) = A(p)\nabla \phi(p) \quad \text{ and } \quad  \nabla^{2, *}_{\mathcal{G}, 0}\phi (p)=A (p)\nabla^{2}\phi(p)A(p) + M(p),$$where $\nabla \phi(p)$ and  $\nabla^{2}\phi(p)$ denotes the Euclidean gradient and Hessian of $\phi$ at $p$. Hence in order to consider sub-elliptic equations in terms on Euclidean derivatives, it is natural to  introduce the operator:
$$\mathcal{G}(t, p, \eta_E, X_E):=\mathcal{F}(t, p, A(p)\eta_E, A (p)X_EA(p) + M(p)), \quad \eta_E \in \mathbb{R}^{N}, X_E \in S^{N}(\mathbb{R})$$which necessarily depends  on $t$ and $p$ (so the existence result from \cite{Kasai} is not applied). Moreover, hypothesis ($\mathcal{F}4$) does not imply, in general, \textbf{(F4)} from \cite{Kasai}, since the norm of $A(p)$ is not uniformly bounded in $\mathbb{G}$. 
\end{remark}
 
 
 \subsection{Some Applications} \subsubsection{Mean curvature flow equation} If $M \subset \mathbb{G}$ is a smooth hypersurface, we define $\Sigma(M)$ as the set of cha\-racteristic points of $M$, that is, the points $p \in M$ where the horizontal distribution at $p$ is contained in the tangent space of $M$ at $p$. The horizontal mean curvature flow is the flow $t \to M_t$ in which each point $p(t)\notin \Sigma(M_t)$ in the evolving surface moves along the horizontal normal with speed given by the horizontal  mean curvature. The equation, outside the characteristic set, may be written as \eqref{MCFEn}. For (\ref{MCFEn}), the singular operator  is given by:
 
 \begin{eqnarray*}
\mathcal{F}_{MCF}(\eta, \mathcal{X})= - \mbox{tr}\Big[\Big( I - \frac{\eta \otimes \eta}{\|\eta\|^{2}}\Big)\mathcal{X}\Big],
\end{eqnarray*}for $\eta \in \mathbb{R}^{m_1}\setminus \left\lbrace 0 \right\rbrace $ and $\mathcal{X}\in S^{m_1}(\mathbb{R})$. Hence, existence of solutions follows from Theorem \ref{existenceb}.

 \subsubsection{Parabolic infinite Laplacian} The infinite Laplacian is connected with the problem of finding minimal Lipschitz extensions, called absolute minimizers. In a Carnot group, we say that a Lipschitz function $u$ in $\overline{\Omega}$ is an absolute minimizer if for every $V \subset \Omega$ and every Lipschitz function $h$ in $V$ such that $u=v$ on $\partial V$, there holds:
 $$\|\nabla_{\mathcal{G},0}u\|_{L^{\infty}(V)}\leq \|\nabla_{\mathcal{G}, 0}h\|_{L^{\infty}(V)}.$$It has been established independently in \cite{BC} and \cite{W} that absolutely minimizers are viscosity solutions of the infinite Laplace equation. We consider the parabolic and normalized counterpart:
 \begin{eqnarray}\label{infty}
 \partial_t u(t,p)= \frac{1}{\sum_{i=1}^{m_1}(X_iu)^{2}}\sum_{i, j=1}^{m_1}X_iuX_jX_iX_ju.
\end{eqnarray}  Hence, the singular operator is:
 \begin{eqnarray*}
\mathcal{F}_{PIL}(\eta, \mathcal{X}) = - \frac{1}{\|\eta\|^{2}}\langle \mathcal{X}\eta, \eta \rangle,
\end{eqnarray*}for $\eta \in \mathbb{R}^{m_1}\setminus \left\lbrace 0 \right\rbrace $ and $\mathcal{X}\in S^{m_1}(\mathbb{R})$. Hence, existence is derived from our analysis.

 \section{Preliminaries}\label{prelims}
 
 \subsection{Carnot groups}\label{sec CG}
 
Let $\mathbb{G}$ be a connected and simply connected Lie group, whose Lie algebra $\mathcal{G}$ is real and N-dimensional. We say that $\mathbb{G}$ is a Carnot group of step $l\geq 1$ if  $\mathcal{G}$ has a stratification, that is, there exist vector spaces $V_1$, ..., $V_l$ such that:
 \begin{eqnarray*}
 \mathcal{G}= V_1 \oplus \cdots \oplus V_l, \quad [V_1, V_i]=V_{i+1}, \quad 1\leq i \leq l-1, \quad [V_i, V_l]=0, \quad i=1, ..., l.
 \end{eqnarray*}Here, $[V_1, V_i]$ stands for the linear subspace generated by the vectors $[X, Y]$, where $X \in V_1$ and $Y \in V_i$. In particular, $\mathbb{G}$ is nilpotent. Choose a Riemannian metric with respect to which the $V_i$ are mutually orthogonal. Let $m_i = \mbox{dim } V_i$, for $i=1, ..., l$ and consider $h_r = m_1 + \cdots + m_r$, $0 \leq r \leq l$, with $h_0=0$. Choose an orthonormal  basis of $V_i$ of left-invariant vector fields $X_{j}$, $j=h_{i-1}+1, ..., h_i$. Thus, the dimension of $\mathbb{G}$ as a manifold is $N= h_l= m_1+\cdots m_l$. The exponential map  $\mbox{exp}: \mathcal{G}\to \mathbb{G}$ is a global diffeomorphism and may be used to define exponential coordinates $\varphi$ in $\mathbb{G}$ as follows: any $p \in \mathbb{G}$
 may be written uniquely as
 $$p= \mbox{exp}\big(p_1X_{1}+ \cdots + p_N X_{N} \big),$$and thus we may put $\varphi(p)=(p_1,..., p_N)$. In this way, we identify $\mathbb{G}$ with $(\mathbb{R}^{N}, \cdot)$, where the group law $\cdot$ is given by the Campbell-Hausdorff formula \cite{BLU} as:
 \begin{eqnarray}\label{operation}
 \varphi_i(p\cdot q) = \varphi_i(p) + \varphi_i(q) + R_i(p, q), \qquad i=1, ..., N,
 \end{eqnarray}where $R_i$ depends only on $\varphi_k$ for $k < i$.  In what follows, we just write $p_i$ for $\varphi_i(p)$. We sometimes use the notation:
 $$p= (p_{1, 1}, ..., p_{l, m_l}).$$
 
 The first layer $V_1$ spanned by the vector fields $X_1, ..., X_{m_1}$ plays an important role in the theory  and it is called the horizontal distribution. Thus, for every $p$ in $\mathbb{G}$:
 \begin{eqnarray*}
V_{1,p}= \mbox{ span }\big\{X_{1,p}, ..., X_{m_1, p}\big\}.
 \end{eqnarray*}
 

 \subsubsection{Metric structure on $\mathbb{G}$} If $\gamma: I=[0,1] \to \mathbb{G}$ is an absolutely continuous curve in $\mathbb{G}$ that satisfies:
$$\gamma'(t) \in V_{1, \gamma(t)}, \textnormal{ for a. e.  }t \in I,$$we call $\gamma$ a horizontal path. We define the Carnot-Carath\'eodory distance on $\mathbb{G}$ by:
\begin{eqnarray*}
d_C(p,q)= \inf\Big\{\|\gamma'(t)\|:\gamma'(t) \in V_{1, \gamma(t)}, \,\forall t \in I, \,\gamma(0)=p,\,\gamma(1)=q \Big\},
\end{eqnarray*}where $\|\cdot \|$ is the norm induced by the Riemann structure on $\mathcal{G}$. Since Carnot groups satisfy the H\"{o}rmander's condition, we get by Chow's Theorem, that $d$ is well-defined.  It is well-known that the topology induced by $d_C$ is equivalent to the Euclidean topology. However, $d_C$ is not bi-Lipschitz equivalent to the 	Euclidean distance (see \cite{NS}).

 \subsubsection{Calculus on Carnot groups} Consider the Carnot group $\mathbb{R}\times \mathbb{G}$ where we add $\partial/\partial t$ to the horizontal frame as $X_0$. For any $1\leq k \leq l$, we say that $u: \mathbb{R}\times \mathbb{G}\to \mathbb{R}$ belongs to $\mathcal{C}^{k}_{sub}$ if it is continuous and:
 $$X^{I}u \text{ is continuous for all } I=(i_0, i_1, ..., i_{N}) \text{ so that } d(I)=i_0 d_0+ i_1d_1+\cdots + i_Nd_N \leq k$$where $d_0=1$ and  $d_m=j$ if the corresponding vector field belongs to $V_j$, $j=1, ..., l.$

The full (spacial) gradient with respect to the Carnot frame $\{X_1, ..., X_N\}$ will be given by:
 $$\nabla_{\mathcal{G}}u(p)= \sum_{i=1}^{N}(X_{i, p}u) X_{i, p}, \qquad p \in \mathbb{G}.$$We shall also consider the horizontal and second order horizontal gradients of $u$:
\begin{eqnarray*}
\nabla_{\mathcal{G}, 0}u(p)= \sum_{i=1}^{h_1}(X_{i,p}u)X_{i, p}, \quad \text{and }\,\nabla_{\mathcal{G}, 1}u(p)= \sum_{i=h_1+1}^{h_2}(X_{i,p}u)X_{i, p}.
\end{eqnarray*}
The symmetrized horizontal Hessian matrix, denoted by $\nabla^{2, *}_{\mathcal{G}, 0}u$, has the entries:
\begin{eqnarray*}
\nabla^{2, *}_{\mathcal{G}, 0}u_{ij}= \frac{1}{2}\big( X_iX_ju+X_jX_iu \big), \quad i,j=1,..., m_1,
\end{eqnarray*}The stratified  Taylor expansion of a $\mathcal{C}^{2}_{sub}$-function  $u$ at $(t, p)\in \mathbb{R}\times \mathbb{G}$ reads as (see  \cite[Theorem 1.42]{FS} or Exercise 6 in \cite[Chapter 20]{BLU}):
\begin{equation}\label{stratified taylos formula}
\begin{split}
u(s, q)&=u(t, p)+u_t(t, p)(s-t)+ \left\langle \nabla_{\mathcal{G}, 0}u(t, p), (p^{-1}\cdot q)_1\right\rangle + \left\langle \nabla_{\mathcal{G}, 1}u(t, p), (p^{-1}\cdot q)_2\right\rangle\\ & \qquad \qquad \qquad + \frac{1}{2}\left\langle \nabla^{2, *}_{\mathcal{G}, 0}u(t, p)(p^{-1}\cdot q)_1, (p^{-1}\cdot q)_1\right\rangle + o(d_C(p,q)^{2} + |s-t|).
\end{split}
\end{equation}Here,  $(p^{-1}\cdot q)_1$ and $(p^{-1}\cdot q)_2$ denote the projection of $p^{-1}\cdot q$ onto $V_1$ and $V_2$, respectively. We remark that if instead of choosing a left-invariant frame, we consider a right-invariant basis of the Lie algebra, then we may also define right horizontal derivatives of first and second order (see \cite{FS}). 

\subsubsection*{Example 1: Euclidean spaces} The simplest example of a Carnot group is the Euclidean space with the usual norm $(\mathbb{R}^{N}, |\cdot|)$. This is a Carnot group of step $1$.

\subsubsection*{Example 2: Heisenberg group} One of the   most familiar Carnot groups is the Heisenberg group $\mathbb{H}$, whose background manifold is $\mathbb{R}^{2n+1}$. Given two points $p=(p_1, ..., p_{2n+1})$ and $q=(q_1, ..., q_{2n+1})$ in $\mathbb{H}$, we define a group operation by:
\begin{eqnarray*}
p\cdot q= \Big( p_1+q_1, ..., p_{2n}+q_{2n}, p_{2n+1}+q_{2n+1}+ \frac{1}{2}\sum_{i=1}^{2n}(p_iq_{i+n}-p_{i+n}q_i)\Big)
\end{eqnarray*}The standard basis of left invariant vector fields of the Heisenberg Lie algebra, denoted by $\mathcal{H}$, is given by:
\begin{eqnarray}\label{frame H}
&&X_j  = \partial_j -\frac{p_{n+j}}{2}\partial_{2n+1}, \qquad j=1, ..., n.\\
&&Y_j = \partial_{j+n} + \frac{p_j}{2}\partial_{2n+1}, \qquad j=1, ..., n.\\
&&T = \partial_{2n+1}.
\end{eqnarray}Observe that:
$$[X_j, Y_j]=T, \quad j=1, ..., n.$$
Thus, the vector fields $X_1, ..., X_n, Y_1, ..., Y_n$ satisfies the H\"{o}rmander's condition. In this case, the stratification of the Lie algebra is given by:
$$\mathcal{H}= V_1 \oplus V_2,$$where $V_1 = \mbox{span}\{X_1, ..., X_n, Y_1, ..., Y_n\}$ and $V_2= \mbox{span}\{T\}$. Hence, $\mathbb{H}$ is a step-2 group. For applications of the Heisenberg group to quantum me\-chanics, the inte\-rested reader may consult the monograph \cite{Ov}. 

\subsubsection*{Example 3: Engel group} The Engel group $\mathbb{E}^{4}$ is the Carnot group whose Lie algebra is:
$$\mathcal{G}= V_1 \oplus V_2 \oplus V_3,$$where $V_1= \mbox{span}\{X_1, X_2\}$, $V_2= \mbox{span}\{X_3\}$ and $V_3=\mbox{span}\{X_4\}$, with the Carnot frame:
\begin{equation}\label{frame Engel}
\begin{split}
X_1&=\partial_1 - \frac{p_2}{2}\partial_3 -\Big( \frac{p_3}{2}+ \frac{p_2}{12}(p_1 + p_2)\Big)\partial_4\\
X_2&=\partial_2 + \frac{p_1}{2}\partial_3 -\Big( \frac{p_3}{2}- \frac{p_1}{12}(p_1 + p_2)\Big)\partial_4\\
X_3&= \partial_3 +\frac{1}{2}(p_1+p_2)\partial_4\\
X_4&=\partial_4,
\end{split}
\end{equation}$p=(p_1, ..., p_4).$ The Engel group is a Carnot group of step $3$. For more details on the Engel group see \cite{Mst}.

\subsection{Jets and viscosity solutions in Carnot groups} We shall recall the definition of parabolic jets in Carnot groups and the notion of viscosity solutions.

\subsubsection{Parabolic jets} Let $u$ be an upper-semicontinuous function in $[0, T]\times \mathbb{G}$. We define the parabolic superjet of $u$ at the point $(t,p)$ as:
\begin{eqnarray*}
&&\mathcal{P}^{2, +}u(t, p) = \Big\{(a, \eta, \xi, \mathcal{X}) \in \mathbb{R}\times \mathbb{R}^{m_1} \times \mathbb{R}^{m_2}\times S^{m_1}(\mathbb{R})\quad \mbox{such that } \\ &&\quad \qquad \, u(s, q) \leq u(t, p)+ a(s-t)+ \left\langle \eta, (p^{-1}\cdot q)_1 \right\rangle + \left\langle \xi, (p^{-1}\cdot q)_2 \right\rangle  \\ && \qquad\qquad\quad + \frac{1}{2}\left\langle \mathcal{X}(p^{-1}\cdot q)_1,  (p^{-1}\cdot q)_1 \right\rangle + o\big(d_c(p, q)^{2}+ |s-t|\big)\,\,\text{as } (s, q)\to (t,p)\Big\}.
\end{eqnarray*}Similarly, if $v$ is lower semicontinuous in  $[0, T]\times \mathbb{G}$, we define the parabolic subjet $\mathcal{P}^{2, -}v(t, p).$ It is known that parabolic jets may be seen as appropriate derivatives of test functions touching the given function by above or below. More precisely, if $u$ us upper semicontinuous, let us consider:
\begin{eqnarray*}
&&\mathcal{K}^{+, 2}u( t, p)=\Big\{\big(\phi_t(t,p), \nabla_{\mathcal{G},0}\phi(t,p),  \nabla_{\mathcal{G},1}\phi(t,p),  \nabla^{2, *}_{\mathcal{G}, 0}\phi(t,p)\big) \mbox{ so that $\phi$ is } \mathcal{C}^{2}_{sub}\\&& \quad  \qquad \qquad \qquad\mbox{and } (u-\phi)(s,q)\leq (u-\phi)(t,p) \mbox{ for all }(s,q) \mbox{ close to }(t,p)\Big\}
\end{eqnarray*}and similarly define $\mathcal{K}^{-, 2}v( t, p)$ for test function touching the lower semicontinuous function $v$ from below. Hence it follows that:
\begin{eqnarray}\label{IDENT}
\mathcal{P}^{2, +}u(t,p)= \mathcal{K}^{2,+}u(t,p)
\end{eqnarray}and:
\begin{eqnarray*}
\mathcal{P}^{2, -}v(t,p)= \mathcal{K}^{2,-}v(t,p).
\end{eqnarray*}Finally, we shall also consider the theoretic closure of the sets defined above. We define $\overline{\mathcal{P}}^{2, +}u(t,p)$ as the set of  $(a, \eta, \xi, \mathcal{X})$ in $\mathbb{R}\times \mathbb{R}^{m_1}\times \mathbb{R}^{m_2}\times S^{m_1}(\mathbb{R})$ so that there exists a sequence $(t_n, p_n, a_n, \eta_n, \xi_n,  \mathcal{X}_n)$ converging to $(t,p,a,\eta, \mathcal{X})$ satisfying $(a_n, \eta_n, \mathcal{X}_n)\in \mathcal{P}^{2, +}u(t_n, p_n)$ for all $n$. In a similar way, we define $\overline{\mathcal{P}}^{2, -}v(t,p)$.

\subsubsection{Viscosity solutions} Let $\Omega \subset \mathbb{G}$ be a domain. Observe that in the following definition, the operator $\mathcal{F}$ does not depend on $\xi \in \mathbb{R}_{m_2}$. 

\begin{definition}\label{definition} An upper semicontinuous function $u: [0,T]\times \overline{\Omega} \to \mathbb{R}$ is a subsolution to the equation:
\begin{equation}\label{eq w}
u_t+\mathcal{F}(t, p, u, \nabla_{\mathcal{G}, 0}u, \nabla_{\mathcal{G}, 0}^{2, *}u)=0
\end{equation}in $(0,T)\times \Omega$ if for every $(t,p)\in (0,T)\times \Omega$ and every subjet $(a, \eta,  \xi, \mathcal{X}) \in \overline{\mathcal{P}}^{2, +}u(t,p)$ there holds:
\begin{eqnarray}\label{subsolution}
a + \mathcal{F}_{*}(t,p, u(t,p), \eta, \mathcal{X}) \leq 0.
\end{eqnarray}Here, the subscript $*$ stands for the lower semicontinuous envelope of $\mathcal{F}$. Similarly, we say that a lower semicontinuous function $v: [0,T]\times \overline{\Omega}  \to \mathbb{R}$ is a supersolution to the equation (\ref{eq w}) if  for every $(t,p)\in (0,T)\times \Omega$ and every superjet $(a, \eta, \xi,  \mathcal{Y}) \in \overline{\mathcal{P}}^{2, -}v(t,p)$ there holds:
\begin{eqnarray*}
a + \mathcal{F}^{*}(t,p, u(t,p), \eta,  \mathcal{Y}) \geq 0.
\end{eqnarray*}Analogously, the superscript $*$ stands for the upper semicontinuous envelope of $\mathcal{F}$. Finally, we say that a continuous function is a viscosity solution to (\ref{eq w}) if it is a viscosity sub and supersolution.
\end{definition}

\section{Deterministic games: proof of Theorem \ref{existenceb}}\label{existence sect}

In this section we  employ two-person deterministic games  to get existence of  solutions to:

\begin{equation*}
  \left\lbrace
  \begin{array}{l}
       u_t + \mu u + \mathcal{F}(t, p, \nabla_{\mathcal{G}, 0}u, \nabla^{2, *}_{\mathcal{G}, 0}u) =0, \textnormal{ on } (0, T) \times \mathbb{G}, \\
   \qquad \qquad \qquad \qquad \qquad \,\,\,  u(0, p) = \psi(p), \textnormal{ with } p \in \mathbb{G}.\\
  \end{array}
  \right.
\end{equation*}under the assumptions listed in Theorem \ref{existenceb}.

\begin{remark}\label{remarkhipotesysofFandH} From \textbf{($\mathcal{F}2$)} and \textbf{($\mathcal{F}3$)}, we conclude that $\mathcal{F}$ has at most linear growth (and at least linear decay). In fact, there exists $C=C(\lambda_0, \lambda_1)$ such that:
\begin{equation*}
    \vert \mathcal{F}(t, p, \eta, \mathcal{X}) \vert \leq C (1 + \Vert \mathcal{X} \Vert), \textnormal{ for } (t, p, \eta, \mathcal{X}) \in [0, T] \times \mathbb{G}\times (\mathbb{R}^{m_1} \backslash \lbrace 0 \rbrace) \times \mathcal{S}^{m_1}(\mathbb{R}).
\end{equation*}
In addition,  from \textbf{($\mathcal{F}3$)}, $\mathcal{F}$ is (degenerate) elliptic, since $\mathcal{F}(\cdot, \hat{\mathcal{X}}) \leq \mathcal{F}(\cdot, \mathcal{X})$ for $\hat{\mathcal{X}} \geq \mathcal{X}$.
\end{remark}

 We first describe the setting of the game. There are two players, Player I and Player II.  Let $T>0$ be the maturity of the game.  For each $\varepsilon \in (0, 1)$ let $m$ be the number of steps:
\begin{equation*}
    m := \left[\dfrac{T}{\varepsilon^{2}}\right],    
\end{equation*}where $[\cdot]$ is the integer part function. The players' choices are the following:

\begin{itemize}
\item the initial position is $p_0=p$ at $t_0=0$;
\item Player I chooses a pair $(\eta_0, \mathcal{X}_0) \in \left( \mathbb{R}^{m_1} \backslash \lbrace 0 \rbrace \right) \times \mathcal{S}^{m_1}\left( \mathbb{R} \right)$, with
\begin{equation*}
    \Vert \eta_0 \Vert \leq \varepsilon^{-1/4}, \qquad \Vert \mathcal{X}_0 \Vert \leq \varepsilon^{-1/2};
\end{equation*}
\item for these choices of Player I, Player II chooses a horizontal direction: \begin{equation*}
    q_0=(\nu_0, 0) \in \mathbb{G}, \textnormal{ with } \nu_0 \in \mathbb{R}^{m_1} \textnormal{ and } \vert q_0 \vert_{\mathbb{G}} \leq \varepsilon^{-1/4},
\end{equation*}where the Carnot gauge is defined as:
\begin{equation}\label{Heisenberggauge}
\vert p\vert_\mathbb{G}:= \sum_{j=1}^{l}\sum_{i=1}^{m_j}|p_{i, j}|^{1/j} \approx d_C(p, 0).
\end{equation}
\item Player I moves from $p_0$ to $p_1:=p_0 \cdot \delta_\varepsilon(q_0)$, where the dilatation $\delta_\varepsilon$ is given by $[\delta_\varepsilon(p)]_{i,j}=\varepsilon^{i}p_{i, j}$;
\item the above steps are repeated $m$ times;
\item at the maturity time $T$, Player I is at the final position $p_m$ and pays to Player II the amount:
\begin{equation}\label{pay off}
\left( \dfrac{1}{1+\mu\varepsilon^2} \right)^{m} \psi(p_{m}) + \sum\limits_{i=0}^{m -1} \left( \dfrac{1}{1+\mu\varepsilon^2} \right)^{i+1} R^{\varepsilon}(T-i\varepsilon^{2}, p_i, q_i, \eta_i, \mathcal{X}_i),
\end{equation}where $R^{\epsilon}$ is the running cost  defined in \eqref{running cost}, and $p_i$, $q_i, \eta_i$ and $\mathcal{X}_i$ are the  choices of the players at the $i$-th step. 

\end{itemize}

The value $u^{\varepsilon}$ of the game is obtained by considering that Player I has the objective of minimize \eqref{pay off} and Player II wants to maximize it. 

We extend now the set up of the game to any maturity $t$ and we formalize the definition of $u^{\varepsilon}$. Take $t \in [0, T]$ and consider the partition:
\begin{equation}\label{eqpartition}
    [0, T] = \lbrace 0 \rbrace \cup \left( \bigcup\limits_{k=1}^{m} \left( (k-1)\varepsilon^2, k\varepsilon^2 \right] \right).
\end{equation}Then if $t \neq 0$, there is a unique $k_t \in \lbrace 1, 2, \ldots, m \rbrace$ such that $t \in \left( (k_t-1)\varepsilon^2, k_t\varepsilon^2 \right]$. 

If $t=0$, we define:
\begin{equation*}
    u^{\varepsilon}(t, p) := \psi(p).
\end{equation*}

When $t \neq 0$, we have:
\begin{equation}\label{DPP}
    u^{\varepsilon}(t, p) := \inf_{\eta_{0}, \mathcal{X}_{0}} \sup_{q_{0}} \ldots \inf_{\eta_{k_t-1}, \mathcal{X}_{k_t-1}} \sup_{q_{k_t-1}} \left\{ \left( \dfrac{1}{1+\mu\varepsilon^2} \right)^{k_t} \psi(p_{k_t}) + \sum\limits_{i=0}^{k_t -1} \left( \dfrac{1}{1+\mu\varepsilon^2} \right)^{i+1} R^{ \varepsilon}(t-i\varepsilon^{2}, p_i, q_i, \eta_i, \mathcal{X}_i) \right\},
\end{equation}where:
\begin{equation}\label{running cost}
    R^{\varepsilon}(t, p_j, q_j, \eta_j, \mathcal{X}_j) := -\varepsilon \langle \eta_j, \nu_j \rangle -\dfrac{\varepsilon^2}{2} \left\langle \mathcal{X}_j \nu_j, \nu_j \right\rangle - \varepsilon^2 \mathcal{F} \left(t, p_j, \eta_j, \mathcal{X}_j \right).
\end{equation}

We introduce a rigorous definition for the values of the game based on the Dynamic Programming Principle. 

\begin{definition}For $\psi \in BC(\mathbb{G})$, define inductively:
$$u^{\varepsilon}(0, p)=\psi(p)$$and for $t \in ((k_t-1)\varepsilon^{2}, k_t\varepsilon^{2}]$:
$$u^{\varepsilon}(t, p)=\frac{1}{1+\mu \varepsilon^{2}}\inf_{\eta, \mathcal{X}}\sup_{q}\left\lbrace u^{\varepsilon}(t-\varepsilon^{2}, p \cdot \delta_\varepsilon(q))+R^{\varepsilon}(t, p, q, \eta, \mathcal{X})\right\rbrace.$$When $k_t=1$, $u^{\varepsilon}(t-\varepsilon^{2}, p\cdot \delta_\varepsilon(q))$ is understood as $u^{\varepsilon}(0, p)$.

\end{definition}

We start with two technical lemmas regarding some Lipschitz regularity of the value functions for smooth initial datum $\psi$. 
\begin{lemma}\label{lemmalipschitzJtime} Let $\psi$ be smooth and so that the right and left horizontal derivatives of first and second order are bounded in $\mathbb{G}$. Then, there is a constant $C=C[\psi]>0$  such that for all $p, \hat{p} \in \mathbb{G}$ and $t \in [0, T]$:
\begin{equation*}
    \vert u^{\varepsilon}(t, p)-u^{\varepsilon}(t, \hat{p}) \vert \leq C \vert p\cdot \hat{p}^{-1} \vert_{\mathbb{G}} + k_t\varepsilon^{2}\omega \left( \vert p\cdot \hat{p}^{-1} \vert_{\mathbb{G}}\right),
\end{equation*}where $\omega$ is the modulus of continuity from $(\mathcal{F} 3)$.
\end{lemma}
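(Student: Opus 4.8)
I would argue by induction on the step index $k_t$ (setting $k_0=0$), using the Dynamic Programming Principle in the Definition just above.

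First I would settle the base case $t=0$: there $u^{\varepsilon}(0,\cdot)=\psi$, so the claim is $|\psi(p)-\psi(\hat{p})|\le C|p\cdot\hat{p}^{-1}|_{\mathbb{G}}$. Since $\psi$ is smooth with bounded \emph{right-invariant} horizontal derivatives of first order, it is Lipschitz with respect to the right-invariant Carnot--Carath\'eodory distance; as all homogeneous (pseudo\nobreakdash-)distances on $\mathbb{G}$ are equivalent and $(p,\hat{p})\mapsto|p\cdot\hat{p}^{-1}|_{\mathbb{G}}$ is one (it is right-invariant), this yields the base case with $C=C[\psi]$ independent of $\varepsilon$.

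For the inductive step, assuming the estimate for $u^{\varepsilon}(t-\varepsilon^{2},\cdot)$ (whose partition cell has index $k_t-1$; when $k_t=1$ the relevant term is $u^{\varepsilon}(0,\cdot)=\psi$, where I would invoke the base case), I would write $u^{\varepsilon}(t,p)=(1+\mu\varepsilon^{2})^{-1}A_p$ with $A_p:=\inf_{\eta,\mathcal{X}}\sup_{q}\{u^{\varepsilon}(t-\varepsilon^{2},p\cdot\delta_\varepsilon(q))+R^{\varepsilon}(t,p,q,\eta,\mathcal{X})\}$ and similarly $A_{\hat{p}}$, and then use the elementary bounds $|\inf f-\inf g|\le\sup|f-g|$ and $|\sup f-\sup g|\le\sup|f-g|$ over the base-point-independent choice sets to get
$$|A_p-A_{\hat{p}}|\le\sup_{\eta,\mathcal{X},q}\Big(\big|u^{\varepsilon}(t-\varepsilon^{2},p\cdot\delta_\varepsilon(q))-u^{\varepsilon}(t-\varepsilon^{2},\hat{p}\cdot\delta_\varepsilon(q))\big|+\big|R^{\varepsilon}(t,p,q,\eta,\mathcal{X})-R^{\varepsilon}(t,\hat{p},q,\eta,\mathcal{X})\big|\Big).$$
Two observations then close it. Since Player I moves by \emph{right} multiplication, $(p\cdot\delta_\varepsilon(q))\cdot(\hat{p}\cdot\delta_\varepsilon(q))^{-1}=p\cdot\hat{p}^{-1}$, so the inductive hypothesis bounds the first term by $C|p\cdot\hat{p}^{-1}|_{\mathbb{G}}+(k_t-1)\varepsilon^{2}\omega(|p\cdot\hat{p}^{-1}|_{\mathbb{G}})$ with \emph{no} enlargement of the argument of $\omega$. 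For the second term, only $-\varepsilon^{2}\mathcal{F}(t,\cdot,\eta,\mathcal{X})$ in $R^{\varepsilon}$ depends on the base point, and as $\eta\ne 0$ along plays, $(\mathcal{F}3)$ together with the inversion symmetry $|r^{-1}|_{\mathbb{G}}=|r|_{\mathbb{G}}$ of the gauge gives $|R^{\varepsilon}(t,p,q,\eta,\mathcal{X})-R^{\varepsilon}(t,\hat{p},q,\eta,\mathcal{X})|\le\varepsilon^{2}\omega(|p\cdot\hat{p}^{-1}|_{\mathbb{G}})$. Adding, and using $(1+\mu\varepsilon^{2})^{-1}\le 1$, I would obtain $|u^{\varepsilon}(t,p)-u^{\varepsilon}(t,\hat{p})|\le C|p\cdot\hat{p}^{-1}|_{\mathbb{G}}+k_t\varepsilon^{2}\omega(|p\cdot\hat{p}^{-1}|_{\mathbb{G}})$ with the same $C$, closing the induction.

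The hard part is conceptual rather than computational: one must line everything up with the right-invariant structure. It is precisely because the game dynamics is a right translation $p\mapsto p\cdot\delta_\varepsilon(q)$ that the gauge pseudo-distance between two coupled plays is preserved \emph{exactly}, so the position term contributes no per-step error and the modulus term accrues only from the $(t,p)$-dependence of $\mathcal{F}$ controlled by $(\mathcal{F}3)$ --- which is also why the hypothesis on $\psi$ is stated with right-invariant (besides left-invariant) horizontal derivatives. One should also note that the constant $C$ does not blow up along the induction because each step contracts by $(1+\mu\varepsilon^{2})^{-1}\le 1$, while $k_t\le[T/\varepsilon^{2}]$ keeps $k_t\varepsilon^{2}\le T$.
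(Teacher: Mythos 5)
Your proof is correct and takes essentially the same approach as the paper: induction on the partition index, base case via the right-invariant stratified mean value theorem, and inductive step using the DPP, the exact cancellation $(p\cdot\delta_\varepsilon(q))\cdot(\hat{p}\cdot\delta_\varepsilon(q))^{-1}=p\cdot\hat{p}^{-1}$, the symmetry $|r^{-1}|_{\mathbb{G}}=|r|_{\mathbb{G}}$, and $(\mathcal{F}3)$ for the $p$-dependence of the running cost. You make explicit a couple of points the paper leaves implicit (the algebraic cancellation of the $\delta_\varepsilon(q)$ factor, the use of $(1+\mu\varepsilon^2)^{-1}\le 1$, and why the right-invariant hypothesis on $\psi$ is the right one), but the argument is the same.
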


\begin{proof}We proceed by induction. When $t=0$ ($k_t=0$), we have by the boundedness of the horizontal derivatives and the  stratified  mean value theorem \cite[Theorem 1.41]{FS} (modified for right-invariant vector fields, see Remark after \cite[Theorem 1.37]{FS}) that there is $C=C[\psi]>0$ so that:
$$ \vert u^{\varepsilon}(0, p)-u^{\varepsilon}(0, \hat{p}) \vert =  
\vert \psi(p ) - \psi( \hat{p} ) \vert \leq C| p\cdot \hat{p}^{-1}|_\mathbb{G}$$

 Assuming for $t \in ((k_t-1)\varepsilon^{2}, k_t\varepsilon^{2}]$, $k_t \in \left\lbrace 1, ..., m-1\right\rbrace$:
\begin{equation*}
    \vert u^{\varepsilon}(t, p)-u^{\varepsilon}(t, \hat{p}) \vert \leq C\vert p\cdot \hat{p}^{-1} \vert_{\mathbb{G}}+ k_t\varepsilon^{2}\omega\left( \vert p\cdot \hat{p}^{-1} \vert_{\mathbb{G}} \right)
\end{equation*}we have for $\tilde{t}=t+\varepsilon^{2}$:
\begin{equation*}
\begin{split}
& u^{\varepsilon}(\tilde{t}-\varepsilon^{2},  p \cdot \delta_\varepsilon(q) )-u^{\varepsilon}(\tilde{t}-\varepsilon^{2}, \hat{p}\cdot \delta_\varepsilon(q)) +R^{\varepsilon}(\tilde{t}, p, q, \eta, \mathcal{X})-R^{\varepsilon}(\tilde{t},  \hat{p}, q, \eta, \mathcal{X}) \\ & \leq C\vert p\cdot \hat{p}^{-1}\vert_{\mathbb{G}} + k_t\varepsilon^{2}\omega\left( \vert p\cdot \hat{p}^{-1} \vert_{\mathbb{G}} \right) + \varepsilon^{2}\omega \left(\vert p\cdot \hat{p}^{-1}\vert_{\mathbb{G}}  \right) \leq C\vert p\cdot \hat{p}^{-1}\vert_{\mathbb{G}} + k_{\tilde{t}}\varepsilon^{2}\omega\left( \vert p\cdot \hat{p}^{-1} \vert_{\mathbb{G}} \right), \quad  k_{\tilde{t}}:=k_t+1.
\end{split}
\end{equation*}Thus:

\begin{equation*}
\begin{split}
& u^{\varepsilon}(\tilde{t}-\varepsilon^{2},  p \cdot \delta_\varepsilon(q) )+R^{\varepsilon}(\tilde{t},  p, q, \eta, \mathcal{X})\\ & \leq  C\vert p\cdot \hat{p}^{-1} \vert_{\mathbb{G}} + k_{\tilde{t}}\varepsilon^{2}\omega\left( \vert p\cdot \hat{p}^{-1} \vert_{\mathbb{G}} \right) + u^{\varepsilon}(\tilde{t}-\varepsilon^{2}, \hat{p} \cdot \delta_\varepsilon(q) ) + R^{\varepsilon}(\tilde{t},  \hat{p}, q,  \eta, \mathcal{X})
\end{split}
\end{equation*}Taking $\sup_q$ and then $\inf_{\eta, \mathcal{X}}$ we derive:

$$|u^{\varepsilon}(\tilde{t}, p)-u^{\varepsilon}(\tilde{t}, \hat{p})| \leq C\vert p\cdot \hat{p}^{-1}  \vert_{\mathbb{G}} + k_{\tilde{t}}\varepsilon^{2}\omega\left( \vert p\cdot \hat{p}^{-1}  \vert_{\mathbb{G}} \right).$$

\end{proof}

\begin{lemma}\label{lemmalipschitzJspace}Let $\psi$ be smooth and  so that the right and left horizontal derivatives of first and second order are bounded in $\mathbb{G}$.   Then, there is $C=C(\psi, \lambda_0, \lambda_1)>0$ such that for all $p \in \overline{\Omega}$ and $t$: 
\begin{equation*}
    \vert u^{\varepsilon}(t, p)-u^{\varepsilon}(t-\varepsilon^{2}, p) \vert \leq C \left( \dfrac{1}{1 + \mu \varepsilon^2} \right)^{k_t} \varepsilon^2+ (k_t-1)\varepsilon^{2}\omega(\varepsilon^{2}),
\end{equation*}where $\omega$ is the modulus of continuity from $(\mathcal{F}3)$.   
\end{lemma}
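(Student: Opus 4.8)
The plan is to argue by induction on the number of elapsed steps $k_t$, using the dynamic programming principle (DPP) from the definition of $u^\varepsilon$: the base case is $k_t=1$, and the inductive step passes from $k_t-1$ to $k_t$.

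For the base case, the term $u^\varepsilon(t-\varepsilon^2,\cdot)$ entering the DPP reduces, by the convention in the definition, to the initial datum, so $u^\varepsilon(t,p)$ is $\tfrac{1}{1+\mu\varepsilon^2}$ times an $\inf_{\eta,\mathcal{X}}\sup_q$ of an expression built only from $\psi$ and the running cost $R^\varepsilon$, while $u^\varepsilon(t-\varepsilon^2,p)=\psi(p)$. I would use the stratified Taylor formula for $\psi$ (exploiting its bounded horizontal derivatives) so that Player I, choosing $\eta$ close to $\nabla_{\mathcal{G},0}\psi(p)$ and $\mathcal{X}=\nabla^{2,*}_{\mathcal{G},0}\psi(p)$ (admissible for $\varepsilon$ small), cancels, up to an error controllable by the choice, the first- and second-order terms; what is left is $-\varepsilon^2\mathcal{F}$ at bounded arguments, which is $O(\varepsilon^2)$ by the linear growth of $\mathcal{F}$ recorded in Remark \ref{remarkhipotesysofFandH}, together with the Taylor remainder, kept of order $\varepsilon^2$ by invoking the technical lemma of the Appendix to discard far moves $q$. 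For the matching lower bound, keep a generic $(\eta,\mathcal{X})$ played by Player I: if $\mathcal{F}(t,p,\eta,\mathcal{X})$ is bounded by a constant depending only on $\lambda_0,\lambda_1$, then playing $q=0$ already gives $\sup_q R^\varepsilon\ge -C\varepsilon^2$; otherwise ($\mathcal{F}2$) and ($\mathcal{F}3$) force $\mathcal{E}^+(-\mathcal{X})$ to be large, so $\mathcal{X}$ has a unit eigenvector $v$ with very negative eigenvalue and Player II plays $q=\pm(\lambda_1 v,0)$ --- a bounded, hence admissible, move, with the sign chosen so that the linear term is $\ge0$ --- whose quadratic contribution to $R^\varepsilon$ offsets $\varepsilon^2\mathcal{F}(t,p,\eta,\mathcal{X})$ up to a term of order $\varepsilon^2$, again giving $\sup_q R^\varepsilon\ge -C\varepsilon^2$. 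Combining with the factor $\tfrac{1}{1+\mu\varepsilon^2}$ and $|\psi(p)|\le\|\psi\|_\infty$ produces the asserted bound with $k_t=1$.

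For the inductive step ($k_t\ge2$), write both $u^\varepsilon(t,p)$ and $u^\varepsilon(t-\varepsilon^2,p)$ through one application of the DPP, so each is $\tfrac{1}{1+\mu\varepsilon^2}\inf_{\eta,\mathcal{X}}\sup_q\{u^\varepsilon(\cdot\,,p\cdot\delta_\varepsilon q)+R^\varepsilon(\cdot\,,p,q,\eta,\mathcal{X})\}$ at the time levels $t-\varepsilon^2$ and $t-2\varepsilon^2$ respectively, over the same strategy sets. Using $|\inf\sup A-\inf\sup B|\le\sup|A-B|$, the difference is bounded by $\tfrac{1}{1+\mu\varepsilon^2}$ times the supremum over $(\eta,\mathcal{X},q)$ of $|u^\varepsilon(t-\varepsilon^2,p\cdot\delta_\varepsilon q)-u^\varepsilon(t-2\varepsilon^2,p\cdot\delta_\varepsilon q)|+|R^\varepsilon(t,p,q,\eta,\mathcal{X})-R^\varepsilon(t-\varepsilon^2,p,q,\eta,\mathcal{X})|$. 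The first summand is controlled, uniformly in the moved point $p\cdot\delta_\varepsilon q$, by the inductive hypothesis at step count $k_t-1$; the second equals $\varepsilon^2|\mathcal{F}(t,p,\eta,\mathcal{X})-\mathcal{F}(t-\varepsilon^2,p,\eta,\mathcal{X})|\le\varepsilon^2\omega(\varepsilon^2)$ by the modulus in ($\mathcal{F}3$), since only the time entry changes and by $\varepsilon^2$. Adding these and dividing by $1+\mu\varepsilon^2\ge1$ reproduces exactly the claimed right-hand side at step count $k_t$.

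The main obstacle is the base case: (i) keeping the stratified Taylor remainder of $\psi$ of order $\varepsilon^2$ even though admissible moves may be as large as $|q|_{\mathbb{G}}\le\varepsilon^{-1/4}$ --- this is precisely where the Appendix lemma and the boundedness of the second-order horizontal derivatives of $\psi$ are needed --- and (ii) the lower bound on $\inf_{\eta,\mathcal{X}}\sup_q R^\varepsilon$, where Player I's freedom to take $\|\mathcal{X}\|$ up to $\varepsilon^{-1/2}$ must be neutralized through the dichotomy above, built on ($\mathcal{F}2$)--($\mathcal{F}3$) and the degenerate ellipticity of $\mathcal{F}$. The remaining points --- bookkeeping the accumulation of the modulus $\omega$ over the $k_t$ steps, and the harmless dependence of $C$ on $\|\psi\|_\infty$ and $\mu$ through the term $\mu\varepsilon^2\psi(p)/(1+\mu\varepsilon^2)$ in the base case --- are routine.
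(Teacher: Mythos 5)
Your inductive step is correct and in fact slightly cleaner than the paper's (which applies the DPP one-sidedly and appeals to symmetry); both use only the time part of $(\mathcal{F}3)$ to pick up $\varepsilon^2\omega(\varepsilon^2)$ per step. The issue is in the base case. First, a minor misattribution: the stratified Taylor remainder $O(\varepsilon^2)$ comes from the boundedness of the second-order horizontal derivatives of $\psi$ together with $\vert\delta_\varepsilon(q)\vert_{\mathbb{G}}\le\varepsilon^{3/4}$ (so \cite[Theorem 1.41]{FS}), not from the Appendix lemma, which constructs a good $\overline q$ but says nothing about Taylor errors.

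The substantive gap is in the lower bound. You bound $\sup_q R^\varepsilon$ alone and then throw in $\vert\psi(p)\vert\le\Vert\psi\Vert_\infty$, but the quantity $\inf_{\eta,\mathcal{X}}\sup_q\{\psi(p\cdot\delta_\varepsilon q)-\psi(p)+R^\varepsilon\}$ couples the linear term $\varepsilon\langle\nabla_{\mathcal{G},0}\psi(p),\nu\rangle$ of the Taylor expansion (which is $O(\varepsilon)$, not $O(\varepsilon^2)$, when $\nu\ne0$) to the linear term $-\varepsilon\langle\eta,\nu\rangle$ of $R^\varepsilon$; only their sum $\varepsilon\langle\nabla_{\mathcal{G},0}\psi(p)-\eta,\nu\rangle$ is controllable, by a sign choice keyed to $\nabla_{\mathcal{G},0}\psi(p)-\eta$, not to $\eta$ as your dichotomy prescribes. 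With the sign chosen only to make $-\varepsilon\langle\eta,\nu\rangle\ge0$, the contribution $\varepsilon\langle\nabla_{\mathcal{G},0}\psi(p),\nu\rangle$ can be $\approx -C_0[\psi]\lambda_1\varepsilon$, so your estimate closes at $-C\varepsilon$, one order short. This is precisely what the paper avoids by invoking Lemma \ref{lemmafundamental} with $\hat\eta=\nabla_{\mathcal{G},0}\psi(p)$ and $\hat{\mathcal{X}}=-C_0[\psi]I$: the Appendix lemma compares $R^\varepsilon(\eta,\mathcal{X})$ to $R^{*,\varepsilon}(\hat\eta,\hat{\mathcal{X}})$, absorbing the combined linear term through the careful eigenvector/sign bookkeeping in its proof. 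Your underlying dichotomy on $\mathcal{F}(t,p,\eta,\mathcal{X})\lessgtr\lambda_0$ could actually be made to work as a self-contained replacement here (indeed it is simpler than the full Lemma \ref{lemmafundamental}, since one only needs a fixed $O(\varepsilon^2)$ bound rather than a modulus $h_K(\varepsilon^{1/4})$), but you must (i) carry the full expression $\psi(p\cdot\delta_\varepsilon q)-\psi(p)+R^\varepsilon$ through the dichotomy rather than $R^\varepsilon$ alone, and (ii) in the large-$\mathcal{F}$ branch, take $\nu=\pm\lambda_1 v$ with sign making $\langle\nabla_{\mathcal{G},0}\psi(p)-\eta,\nu\rangle\ge0$; then the quadratic part $\tfrac{\varepsilon^2}{2}\langle(\nabla_{\mathcal{G},0}^{2,*}\psi(p)-\mathcal{X})\nu,\nu\rangle$ contributes $\tfrac{\lambda_1^2\varepsilon^2}{2}\mathcal{E}^+(-\mathcal{X})$ up to $O(\varepsilon^2)$, which cancels $-\varepsilon^2\mathcal{F}(\eta,\mathcal{X})$ via $(\mathcal{F}2)$--$(\mathcal{F}3)$.
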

\begin{proof}
We proceed by induction. Since $\psi$ is smooth, and has bounded  derivatives, we derive from the stratified Taylor formula \eqref{stratified taylos formula} and the fact that $\delta_\varepsilon$ is horizontal that there is $C[\psi]>0$ so that:
\begin{equation}\label{aprox secod order}
\begin{split}
& |\psi(p\cdot \delta_\varepsilon(q))-\psi(p) - \left\langle \nabla_{\mathcal{G}, 0} \psi(p), [\delta_\varepsilon(q)]_1 \right\rangle -\frac{1}{2}\left\langle \nabla^{2, *}_{\mathcal{G}, 0} \psi(p) [\delta_\varepsilon(q)]_1, [\delta_\varepsilon(q)]_1 \right\rangle|\\ & \qquad \qquad  \leq C[\psi]\varepsilon^{3/2}\cdot \sup_{|z|_\mathbb{G}\leq b \varepsilon^{3/4}, d(I)=2}|X^{I}\psi(p\cdot z)-X^{I}\psi(p)| \qquad \text{ for some universal }b>0\\ & \qquad \qquad \leq C[\psi]\varepsilon^{2},
\end{split}
\end{equation}where we have used the stratified mean value theorem \cite[Theorem 1.41]{FS} for the second order derivatives. Hence for $t \in (0, \varepsilon^{2}]$:
\begin{equation*}
    \begin{split}
       u^{\varepsilon}(t, p)-\psi(p)&\leq \left(\frac{1}{1+\mu \varepsilon^{2}}\right)\inf_{\eta, \mathcal{X}} \sup_{q} \left\lbrace  \varepsilon \langle (\nabla_{\mathcal{G}, 0} \psi(p) - \eta), \nu \rangle \right. \\
        &\qquad  + \dfrac{\varepsilon^{2}}{2} \langle \left( \nabla_{\mathcal{G}, 0}^{2, *}\psi(p) - \mathcal{X} \right) \nu, \nu \rangle - \left. \varepsilon^{2} \mathcal{F}(t, p, \eta, \mathcal{X}) +C[\psi]\varepsilon^{2}\right\rbrace + \frac{\mu \varepsilon^{2}}{1+\mu\varepsilon^{2}}\psi(p)\\& \leq \left(\frac{1}{1+\mu \varepsilon^{2}}\right)\inf_{\eta, \mathcal{X}}  \left\lbrace  \varepsilon^{3/4} \Vert\nabla_{\mathcal{G}, 0} \psi(p) - \eta \Vert \right. \\
        &\qquad  + \dfrac{\varepsilon^{3/2}}{2} \mathcal{E}^{+}\left( C_0[\psi]I - \mathcal{X} \right)  + C\left. \varepsilon^{2} (1+\|\mathcal{X}\|)+C[\psi]\varepsilon^{2}\right\rbrace + C_0[\psi]\varepsilon^{2},
    \end{split}
\end{equation*}where:
\begin{equation}\label{eqdefC0}
    C_{0}[\psi] := \max\left[ \Vert \psi \Vert_{\infty}, \Vert \nabla_{\mathcal{G}, 0}\psi\Vert_{\infty}, \left\Vert \nabla_{\mathcal{G}, 0}^{2, *}\psi\right\Vert_{\infty} \right]
\end{equation}and $I\in S^{N}$ denotes the identity matrix. Let $\varepsilon > 0$ be small enough so that $C_{0}[\psi]$ $\leq \varepsilon^{-1/4}$, then we can choose $(\eta, \mathcal{X}) =\left( \nabla_{\mathcal{G}, 0} \psi (p), C_{0}[\psi] I \right)$ (in the case  $\nabla_{\mathcal{G}, 0} \psi (p)$, take an approximating sequence $0 \neq\eta_n \to 0$) to obtain:
\begin{equation}\label{eqboundedbelowJ-psi}
    \begin{split}
        u^{\varepsilon}(t, p)-\psi(p) &\leq \left[ C \left( 1+ C_{0}[\psi] \right)+C[\psi]\right]\left( \dfrac{1}{1+ \mu \varepsilon^{2}} \right) \varepsilon^{2}.
    \end{split}
\end{equation}
Next, we show the lower bound. Similar to the above arguments, for any $q$ with $\vert q \vert_{\mathbb{G}} \leq \varepsilon^{-1/4}$ we have:
\begin{equation*}
    \begin{split}
        (1+\mu\varepsilon^{2})(u^{\varepsilon}(t, p)-\psi(p)) &\geq \inf_{\eta, \mathcal{X}} \left\lbrace  \varepsilon \langle (\nabla_{\mathcal{G}, 0} \psi(p) - \eta), \nu \rangle \right.+  \dfrac{\varepsilon^{2}}{2} \langle \left( -C_{0}[\psi] I - \mathcal{X} \right) \nu, \nu \rangle  - \left. \varepsilon^{2} \mathcal{F}(t, p, \eta, \mathcal{X}) -C[\psi]\varepsilon^{2} \right\rbrace -C_0[\psi]\varepsilon^{2}. 
    \end{split}
\end{equation*}
Applying Lemma \ref{lemmafundamental} from the Appendix with $\hat{\eta}=\nabla_{\mathcal{G}, 0} \psi(p)$, $\hat{\mathcal{X}}=-C_{0}[\psi]I$ and choosing an appropriate
$q=\overline{q}(\epsilon,\eta, \hat{\eta}, \mathcal{X}, \hat{\mathcal{X}})$, we have in the case $\Vert \nabla_{\mathcal{G}, 0} \psi(p) \Vert \geq 1$:
\begin{equation*}
    \begin{split}
         (1+\mu\varepsilon^{2})(u^{\varepsilon}(t, p)-\psi(p))&\geq - \varepsilon^{2} \mathcal{F}^{*}(t, p, \nabla_{\mathcal{G}, 0} \psi(p), -C_0[\psi]I)
        - h_1(\varepsilon^{1/4}) \varepsilon^2-C[\psi]\varepsilon^{2}-C_0[\psi]\varepsilon^{2},
    \end{split}
\end{equation*}and:
\begin{equation*}
    \begin{split}
         (1+\mu\varepsilon^{2})(u^{\varepsilon}(t, p)-\psi(p))&\geq - \varepsilon^{2} \mathcal{F}^{*}(t, p, 0, -C_0[\psi]I)
        -C[\psi]\varepsilon^{2}-C_0[\psi]\varepsilon^{2},
    \end{split}
\end{equation*}
if $\Vert \nabla_{\mathcal{G}, 0} \psi(p) \Vert \leq 1$, for all sufficiently small $\varepsilon \leq \min[\varepsilon_{1}, \varepsilon_{2}]$ where $R_{0} := C_{0}[\psi]$ and $K=1$.  As in \eqref{eqboundedbelowJ-psi}, we have:
\begin{equation}\label{eqboundedlowerJ-psi}
    \begin{split}
        u^{\varepsilon}(t, p)-\psi(p)&\geq - \left[ C \left( 1+ C_{0}[\psi]\right) + h_1(\varepsilon^{1/4}) + C[\psi]\right] \left( \dfrac{1}{1+ \mu \varepsilon^{2}} \right) \varepsilon^{2}.
    \end{split}
\end{equation}
Combining \eqref{eqboundedbelowJ-psi} and \eqref{eqboundedlowerJ-psi},  we obtain:
\begin{equation}\label{eqboundedJ-psi}
    |u^{\varepsilon}(t, p)-\psi(p)| \leq C\left(\frac{1}{1+\mu \varepsilon^{2}}\right)\varepsilon^{2}.
\end{equation}

Suppose now that:
\begin{equation*}
|u^{\varepsilon}(t-\varepsilon^{2}, p)-u^{\varepsilon}(t, p)| \leq C\left(\frac{1}{1+\mu \varepsilon^{2}}\right)^{k_t}\varepsilon^{2}+(k_t-1)\varepsilon^{2}\omega(\varepsilon^{2}).
\end{equation*}Now, taking $\tilde{t}=t+\varepsilon$:
\begin{equation*}
\begin{split}
& u^{\varepsilon}(t, p\cdot \delta_\varepsilon(q))-u^{\varepsilon}(t-\varepsilon^{2}, p\cdot \delta_\varepsilon(q)) +R^{\varepsilon}(\tilde{t}, p, \eta, \mathcal{X})-R^{\varepsilon}(\tilde{t}-\varepsilon^{2}, p, \eta, \mathcal{X})\\ & \quad \leq C\left(\frac{1}{1+\mu \varepsilon^{2}}\right)^{k_t}\varepsilon^{2}+(k_t-1)\varepsilon^{2}\omega(\varepsilon^{2}) + \varepsilon^{2}\omega(\varepsilon^{2}). 
\end{split}
\end{equation*}where we have used $(\mathcal{F}3)$ in the latter inequality. Taking $\sup_q$ and $\inf_{\eta, \mathcal{X}}$, we derive:
\begin{equation*}
u^{\varepsilon}(\tilde{t}-\varepsilon^{2}, p)-u^{\varepsilon}(\tilde{t}, p) \leq C\left(\frac{1}{1+\mu \varepsilon^{2}}\right)^{k_t+1}\varepsilon^{2}+k_t\varepsilon^{2}\omega(\varepsilon^{2})=C\left(\frac{1}{1+\mu \varepsilon^{2}}\right)^{k_{\tilde{t}}}\varepsilon^{2}+(k_{\tilde{t}}-1)\varepsilon^{2}\omega(\varepsilon^{2}).
\end{equation*}A similar argument is applied to $u^{\varepsilon}(\tilde{t}, p)-u^{\varepsilon}(\tilde{t}-\varepsilon^{2}, p)$. 
\end{proof}

In the next results we shall appeal to the following constant. For a positive integer $K$,  let us set:
\begin{equation*}
    C^{\varepsilon}[\psi, K] := C(1 + C_0[\psi] )+ h_{K}(\varepsilon^{1/4}) + C[\psi],
\end{equation*}where $C$ is the constant from Remark \ref{remarkhipotesysofFandH}, $C_0$ is given by \eqref{eqdefC0}, $C[\psi]$ by \eqref{aprox secod order}  and $h_K$ is the modulus in Lemma \ref{lemmafundamental} from the Appendix.

The next proposition established the convergence of the value functions. 
\begin{proposition}There exists a subsequence $\left\lbrace \varepsilon_j \right\rbrace_j$ converging to $0$ and a  continuous function $u$ so that:
$$u^{\varepsilon_j} \to u$$locally uniformly as $j \to \infty.$ Moreover, $u \in BUC([0, T]\times \mathbb{G})$ and $u(0, p)=\psi(p)$ for all $p$.
\end{proposition}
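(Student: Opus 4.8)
The plan is to first reduce to the case of a smooth initial datum, for which Lemmas \ref{lemmalipschitzJtime} and \ref{lemmalipschitzJspace} apply, then extract a locally uniformly convergent subsequence via the Arzel\`a--Ascoli-type theorem of \cite{MPR}, and finally recover the general $\psi$ satisfying \textnormal{(E)} by monotone approximation and a diagonal argument; throughout I write $u^{\varepsilon}_{\phi}$ for the value function \eqref{DPP} with datum $\phi$. I would begin by recording two consequences of the Dynamic Programming Principle. Since the running cost $R^{\varepsilon}$ in \eqref{running cost} is independent of the datum and $\inf_{\eta,\mathcal{X}}\sup_{q}$ is order preserving, induction on $k_t$ gives $\phi_1\le\phi_2\Rightarrow u^{\varepsilon}_{\phi_1}\le u^{\varepsilon}_{\phi_2}$; and from $|\inf\sup f-\inf\sup g|\le\sup|f-g|$ together with the factor $(1+\mu\varepsilon^{2})^{-1}\le1$, induction yields
\begin{equation*}
\|u^{\varepsilon}_{\phi_1}(t,\cdot)-u^{\varepsilon}_{\phi_2}(t,\cdot)\|_{\infty}\le(1+\mu\varepsilon^{2})^{-k_t}\|\phi_1-\phi_2\|_{\infty}\le\|\phi_1-\phi_2\|_{\infty}.
\end{equation*}
With $\psi^{\pm}_{\delta}$ the smooth approximants from \textnormal{(E)}, this gives $u^{\varepsilon}_{\psi^{-}_{\delta}}\le u^{\varepsilon}_{\psi}\le u^{\varepsilon}_{\psi^{+}_{\delta}}$ and $\|u^{\varepsilon}_{\psi}-u^{\varepsilon}_{\psi^{\pm}_{\delta}}\|_{\infty}\le\delta$, uniformly in $\varepsilon$.

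Next, I would fix a smooth datum $\phi$ with bounded right- and left-invariant horizontal derivatives of first and second order and show that $\{u^{\varepsilon}_{\phi}\}_{\varepsilon\in(0,1)}$ is uniformly bounded and equicontinuous on $[0,T]\times\mathbb{G}$ up to an $O(\varepsilon^{2})$ error. Summing the one-step bound of Lemma \ref{lemmalipschitzJspace} over the at most $k_t\le m=[T/\varepsilon^{2}]$ steps separating two times $s<t$, and using $\omega(r)=O(r)$ (so each error term is $\le T\,\omega(\varepsilon^{2})=O(\varepsilon^{2})$ and $m\varepsilon^{2}\le T$), I would obtain $|u^{\varepsilon}_{\phi}(t,p)-u^{\varepsilon}_{\phi}(s,p)|\le C(|t-s|+\varepsilon^{2})$ and $\|u^{\varepsilon}_{\phi}\|_{\infty}\le\|\phi\|_{\infty}+C$ with $C=C(\phi,\lambda_0,\lambda_1,T)$; while Lemma \ref{lemmalipschitzJtime}, since $k_t\varepsilon^{2}\le T$, provides the $\varepsilon$- and $t$-uniform spatial modulus $|u^{\varepsilon}_{\phi}(t,p)-u^{\varepsilon}_{\phi}(t,\hat p)|\le C|p\cdot\hat p^{-1}|_{\mathbb{G}}+T\,\omega(|p\cdot\hat p^{-1}|_{\mathbb{G}})$. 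Applying the Arzel\`a--Ascoli-type theorem of \cite{MPR} (which tolerates the asymptotic equicontinuity from the $O(\varepsilon^{2})$ term) to each family $\{u^{\varepsilon}_{\psi^{\pm}_{1/n}}\}_{\varepsilon}$, $n\in\mathbb{N}$, and diagonalizing in $n$, I would get a single sequence $\varepsilon_j\to0$ along which $u^{\varepsilon_j}_{\psi^{\pm}_{1/n}}$ converges locally uniformly, for every $n$, to continuous limits $w^{\pm}_{n}$.

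To conclude, for fixed $n$ and a compact $Q\subset[0,T]\times\mathbb{G}$, the contraction estimate gives
\begin{equation*}
\limsup_{j,k\to\infty}\sup_{Q}|u^{\varepsilon_j}_{\psi}-u^{\varepsilon_k}_{\psi}|\le\frac{2}{n}+\limsup_{j,k\to\infty}\sup_{Q}|u^{\varepsilon_j}_{\psi^{+}_{1/n}}-u^{\varepsilon_k}_{\psi^{+}_{1/n}}|=\frac{2}{n},
\end{equation*}
so letting $n\to\infty$ shows $\{u^{\varepsilon_j}_{\psi}\}$ is locally uniformly Cauchy, hence converges locally uniformly to a continuous $u$. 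Passing to the limit $j\to\infty$ in $|u^{\varepsilon_j}_{\psi}-u^{\varepsilon_j}_{\psi^{\pm}_{1/n}}|\le1/n$, valid on all of $[0,T]\times\mathbb{G}$, gives $\|u-w^{\pm}_{n}\|_{L^{\infty}([0,T]\times\mathbb{G})}\le1/n$; since each $w^{\pm}_{n}$ is bounded and uniformly continuous (inheriting the uniform-in-$\varepsilon$ moduli above), $u$ is a uniform limit of $BUC$ functions and therefore $u\in BUC([0,T]\times\mathbb{G})$. Finally $u(0,p)=\lim_{j}u^{\varepsilon_j}_{\psi}(0,p)=\psi(p)$ since $u^{\varepsilon}_{\psi}(0,\cdot)=\psi$ by definition. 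I expect the main obstacle to be the passage from smooth to general data: one must extract a subsequence working \emph{simultaneously} for all the approximants $\psi^{\pm}_{1/n}$ and then use the $L^{\infty}$-contraction in the datum to transfer convergence to $\psi$ itself; a secondary point requiring care is the uniform control of the error terms in iterating Lemma \ref{lemmalipschitzJspace}, where $\omega(r)=O(r)$ is used crucially.
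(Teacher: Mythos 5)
Your proof is correct, and while the first half (reducing to smooth data via (E) and iterating Lemmas \ref{lemmalipschitzJtime} and \ref{lemmalipschitzJspace}, using $\omega(r)=O(r)$ and $m\varepsilon^{2}\le T$ to tame the cumulative errors) parallels the paper, the second half takes a genuinely different route. The paper never isolates compactness for the smooth approximants separately: it combines the sandwich $u^{\varepsilon}_{\psi^{-}_{\delta}}\le u^{\varepsilon}\le u^{\varepsilon}_{\psi^{+}_{\delta}}$ with the moduli for $u^{\varepsilon}_{\psi^{\pm}_{\delta}}$ to write a single $\delta$-corrected asymptotic modulus of continuity for $u^{\varepsilon}$ itself, then proves the bound $\|u^{\varepsilon}\|_{\infty}\le\|\psi\|_{\infty}$ directly from the DPP (using $(\mathcal{F}5)$ and Lemma \ref{lemmafundamental}), and applies \cite[Lemma 4.2]{MPR} exactly once to $u^{\varepsilon}$. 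You instead make the order-preservation and the $L^{\infty}$-contraction of the scheme in the datum explicit, apply \cite[Lemma 4.2]{MPR} to each family $\{u^{\varepsilon}_{\psi^{\pm}_{1/n}}\}_{\varepsilon}$, diagonalize in $n$, and then transfer to $u^{\varepsilon}_{\psi}$ by showing the extracted sequence is locally uniformly Cauchy via the triangle inequality and the $2/n$ contraction bound. Both work; your version is more modular and brings out the stability of the game scheme in the initial condition as a named structural property, at the mild cost of an extra diagonal extraction, whereas the paper's is more economical (one compactness application) and also yields the cleaner a priori bound $\|u^{\varepsilon}\|_{\infty}\le\|\psi\|_{\infty}$ rather than your $\|\phi\|_{\infty}+C(\phi)$, which is all you need but is $n$-dependent. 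One small point worth flagging: your BUC conclusion relies on each $w^{\pm}_{n}$ being BUC with a modulus that may depend on $n$; that is fine because you only need the uniform limit, but it is worth saying explicitly that $u$ inherits BUC from being a uniform (not just local) limit of the BUC functions $w^{\pm}_{n}$, which you do.
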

\begin{proof} For  $\delta>0$ consider the  regularizations $\psi_{\delta}^{\pm} \in \mathcal{C}^{2}$ of $\psi$ from assumption (E):
\begin{equation}\label{ineqpsidelta+-}
    \psi - \delta \leq \psi_{\delta}^{-} \leq \psi \leq \psi_{\delta}^{+} \leq \psi + \delta,
\end{equation}Lemma \ref{lemmalipschitzJtime} implies the estimate for $t \in (0, \varepsilon^{2}]$:
\begin{equation}\label{eqJdelta+-bounded}
    \vert u^{\varepsilon}_{\psi_{\delta}^{\pm}} (t, p) -u^{\varepsilon}_{\psi_{\delta}^{\pm}} (t, \hat{p}) \vert \leq C_{\delta} \vert p \cdot  \hat{p}^{-1}\vert_{\mathbb{G}} +\varepsilon^{2}\omega( \vert p \cdot  \hat{p}^{-1}\vert_{\mathbb{G}} ) ,
\end{equation}
for all $p, \hat{p}$ and all sufficiently small $\varepsilon$, and where $u^{\varepsilon}_{\psi_{\delta}^{\pm}} $ denotes the value function  with $\psi=\psi_{\delta}^{\pm}$.  
Combining \eqref{eqJdelta+-bounded} and \eqref{ineqpsidelta+-}, we conclude that:
\begin{equation*}
    \vert u^{\varepsilon}(t, p)-u^{\varepsilon}(t, \hat{p}) \vert \leq C_{\delta} \vert p \cdot  \hat{p}^{-1}\vert_{\mathbb{G}} +\varepsilon^{2}\omega( \vert p \cdot  \hat{p}^{-1}\vert_{\mathbb{G}} ) + \delta,
\end{equation*}
for $p, \hat{p} \in \overline{\Omega}$ and all $\varepsilon \leq \varepsilon'$. Here $\varepsilon' = \varepsilon' (\psi_{\delta}^{\pm}, \lambda_{0}, \lambda_{1})$ is sufficiently small. Inductively, we derive for $t \in ((k_t-1)\varepsilon^{2}, k_t \varepsilon^{2}]$:
\begin{equation}\label{ineqlemmalipschitzspacenotC2}
    \vert u^{\varepsilon}(t, p)-u^{\varepsilon}(t, \hat{p}) \vert \leq C_{\delta} \vert p \cdot  \hat{p}^{-1}\vert_{\mathbb{G}} +k_t\varepsilon^{2}\omega( \vert p \cdot  \hat{p}^{-1}\vert_{\mathbb{G}} ) + \delta.
\end{equation}
Now, by Lemma \ref{lemmalipschitzJspace}, the estimate:
\begin{equation*}
    \vert u^{\varepsilon}_{\psi_{\delta}^{\pm}} (t, p) -u^{\varepsilon}_{\psi_{\delta}^{\pm}} (t-\varepsilon^{2}, p) \vert \leq C^{\varepsilon} [\psi_{\delta}^{\pm}] \left( \dfrac{1}{1+\mu\varepsilon^2} \right)^{k_t} \varepsilon^2 + (k_t-1)\varepsilon^{2}\omega(\varepsilon^{2}),
\end{equation*} 
holds for each $t$ and all sufficiently small $\varepsilon$  where $C^{\varepsilon} [\psi_{\delta}^{\pm}] :=  \max \left[ C^{\varepsilon} [\psi_{\delta}^+, 1], C^{\varepsilon} [\psi_{\delta}^-, 1] \right]$. If $0 \leq i \leq j \leq m$, and $t \in ((i-1)\varepsilon^{2}, i\varepsilon^{2}]$, $s \in ((j-1)\varepsilon^{2}, j\varepsilon^{2}]$,  then we have:
\begin{equation}
\begin{split}
& u^{\varepsilon}_{\psi_{\delta}^{\pm}} (t-\varepsilon^{2}, p \cdot \delta_\varepsilon(q)) -u^{\varepsilon}_{\psi_{\delta}^{\pm}} (s-\varepsilon^{2}, p \cdot \delta_\varepsilon(q)) +R^{\varepsilon}(t, p, q, \eta, \mathcal{X})-R^{\varepsilon}(s, p, q, \eta, \mathcal{X}) \\ & \,\, \leq u^{\varepsilon}_{\psi_{\delta}^{\pm}} (t-\varepsilon^{2}, p \cdot \delta_\varepsilon(q)) \pm u^{\varepsilon}_{\psi_{\delta}^{\pm}} (t-\varepsilon^{2}+\varepsilon^{2}, p \cdot \delta_\varepsilon(q))\pm \cdots \pm u^{\varepsilon}_{\psi_{\delta}^{\pm}} (t-\varepsilon^{2}+(j-i)\varepsilon^{2}, p \cdot \delta_\varepsilon(q))  -u^{\varepsilon}_{\psi_{\delta}^{\pm}} (s-\varepsilon^{2}, p \cdot \delta_\varepsilon(q)) \\ & \quad+ \varepsilon^{2}\omega(|s-t|) \\ &  \, \, \leq C^{\varepsilon} [\psi_{\delta}^{\pm}](j-i)\varepsilon^{2}+ j(j-i)\varepsilon^{2}w(\varepsilon^{2})+ \varepsilon^{2}\omega(|s-t|) 
\end{split}
\end{equation}Hence, taking $\sup_q$ and $\inf_{\eta, \mathcal{X}}$ we derive for $t \in ((i-1)\varepsilon^{2}, i\varepsilon^{2}]$, $s \in ((j-1)\varepsilon^{2}, j\varepsilon^{2}]$:
$$|u^{\varepsilon}_{\psi_{\delta}^{\pm}}  (t, p) -u^{\varepsilon}_{\psi_{\delta}^{\pm}}  (s, p)| \leq C^{\varepsilon} [\psi_{\delta}^{\pm}](j-i)\varepsilon^{2}+ j(j-i)\varepsilon^{2}w(\varepsilon^{2})+ \varepsilon^{2}\omega(|s-t|). $$

Thus:

\begin{equation}\label{ineqlemmalipschitztimenotC2}
    |u^{\varepsilon} (t, p) -u^{\varepsilon}  (s, p)| \leq C^{\varepsilon} [\psi_{\delta}^{\pm}](j-i)\varepsilon^{2}+ m(j-i)\varepsilon^{2}\omega(\varepsilon^{2})+ \varepsilon^{2}\omega(|s-t|)+ \delta.
\end{equation}Now we prove the proposition. For any $t, s \in [0, T]$ with $t \leq s$, there exist $i, j$ such that $0 \leq i \leq j \leq m$ and:
\begin{equation*}
    j \varepsilon^2 \leq s < (j+1) \varepsilon^2,\,\, i \varepsilon^2 \leq t < (i+1)\varepsilon^{2}.
\end{equation*} From \eqref{ineqlemmalipschitzspacenotC2} and \eqref{ineqlemmalipschitztimenotC2}, it follows:
\begin{equation*}
    \vert u^{\varepsilon}(t, p) - u^{\varepsilon}(s, \hat{p}) \vert \leq C^{\varepsilon} [\psi_{\delta}^{\pm}] (j-i) \varepsilon^2 + (j-i)T\omega(\varepsilon^{2})+ \varepsilon^{2}\omega(|s-t|)+  C_{\delta} \vert p \cdot  \hat{p}^{-1}\vert_{\mathbb{G}} +T\omega( \vert p \cdot  \hat{p}^{-1}\vert_{\mathbb{G}} ) + 2\delta.
\end{equation*}
Set $C_{0}[\psi_{\delta}^{\pm}] := \max \left[ C_{0} [\psi_{\delta}^+] , C_{0} [\psi_{\delta}^-] \right]$. Since $i \varepsilon^2 > t - \varepsilon^{2}$ and $j \varepsilon^2 \leq s$, we have:
\begin{equation*}
   \vert u^{\varepsilon}(t, p) - u^{\varepsilon}(s, \hat{p}) \vert \leq C^{\varepsilon} [\psi_{\delta}^{\pm}] (s-t) +m (s-t)\omega(\varepsilon^{2})+ \varepsilon^{2}\omega(|s-t|)+  C_{\delta} \vert p \cdot  \hat{p}^{-1}\vert_{\mathbb{G}} +T\omega( \vert p \cdot  \hat{p}^{-1}\vert_{\mathbb{G}} ) + 2\delta.
\end{equation*}
Interchanging  $s$ and $t$, we conclude:
\begin{equation}\label{ineqlemmatimespaceforu}
   \vert u^{\varepsilon}(t, p) - u^{\varepsilon}(s, \hat{p}) \vert \leq C^{\varepsilon} [\psi_{\delta}^{\pm}] |s-t| + m |s-t|\omega(\varepsilon^{2})+ \varepsilon^{2}\omega(|s-t|)+  C_{\delta} \vert p \cdot  \hat{p}^{-1}\vert_{\mathbb{G}} +T\omega( \vert p \cdot  \hat{p}^{-1}\vert_{\mathbb{G}} ) + 2\delta.
\end{equation}for all $s, t \in [0, T]$. Making use of the assumption $(\mathcal{F}3)$ on $\omega$ there is a constant $C$ so that:
$$m\omega(\varepsilon^{2}) \leq C.$$ 
 Also, observe that:
\begin{equation*}
    C[\psi_{\delta}^{\pm}] := \lim_{\varepsilon \to 0} C^{\varepsilon} [\psi_{\delta}^{\pm}] = C (1 + C_0 [\psi_{\delta}^{\pm}]).
\end{equation*}Next, take $\eta \in (0, 1)$ and fix $\delta < \eta/6$. Moreover, consider $ \varepsilon_0>0$ so that $\varepsilon_0< \eta/4$ and  for all $\varepsilon < \varepsilon_0$:
$$C^{\varepsilon}[\psi_\delta^{\pm}] \leq C(1+C_0[\psi_\delta^{\pm}]) + \delta.$$Finally, take $r_0>0$ so that $\vert \hat{p}^{-1} \cdot p \vert_{\mathbb{G}} + \vert s - t \vert  < r_0$ gives:
$$\left[\frac{\eta}{4}+ C(1+C_0[\psi_\delta^{\pm}])  \right] r_0 + \varepsilon_0^{2}\omega(r_0)+  C_{\delta} r_0+T\omega(r_0)  < \frac{\eta}{2}.$$Therefore, $\vert \hat{p}^{-1} \cdot p \vert_{\mathbb{G}} + \vert s - t \vert  < r_0$ and $\varepsilon < \varepsilon_0$ imply:
$$|u^{\varepsilon}(t, p)-u^{\varepsilon}(s, \hat{p})| < \eta.$$

Moreover, the functions $u^{\varepsilon}$ are uniformly bounded. Indeed, we proceed to prove the claim by induction. For $t \in (0, \varepsilon^{2}]$,  we have the upper bound:
\begin{equation*}
    \begin{split}
      (1+\mu \varepsilon^{2}) u^{\varepsilon}(t, p) &\leq \Vert \psi \Vert_{\infty} +\ \inf_{\eta, \mathcal{X}} \sup_{q} R^{\varepsilon}(t, p, q, \eta, \mathcal{X}) \\
        &\leq  \Vert \psi \Vert_{\infty} + \inf_\eta \sup_q  R^{\varepsilon}(t, p, q,  \eta, \mathcal{O})  \\ & =  \Vert \psi \Vert_{\infty} + \inf_\eta \sup_q  \left(-\varepsilon \left\langle \eta, \nu \right\rangle -\varepsilon^{2}\mathcal{F}_*(t, p, \eta, \mathcal{O})\right).
        \end{split}
\end{equation*}Taking a sequence  $\eta_k \searrow 0$ with $\|\eta_k\|\leq \varepsilon^{-1/4}$ and using the lower semicontinuity of $\mathcal{F}_*$ together with $(\mathcal{F}5)$, we derive:
\begin{equation*}
    \begin{split}
         u^{\varepsilon}(t, p)  \leq \Vert \psi \Vert_{\infty}
    \end{split}
\end{equation*}
for all $\varepsilon > 0$. Now, taking $\hat{\eta}=0,$ $\hat{\mathcal{X}}=O$ and $R_{0}=1$ in Lemma \ref{lemmafundamental}, there is $\overline{q}$ (depending on $\varepsilon$, $\eta$ and $\mathcal{X}$) so that:
\begin{equation*}
    \begin{split}
         (1+\mu \varepsilon^{2}) u^{\varepsilon}(t, p) &\geq -\Vert \psi \Vert_{\infty} + \inf_{\eta, \mathcal{X}} \sup_{q} R^{\varepsilon}(t, p , q, \eta, \mathcal{X}) \\
        &\geq -\Vert \psi \Vert_{\infty}+\inf_{\eta, \mathcal{X}}   R^{*, \varepsilon}(t, p, \overline{q}, 0, \mathcal{O}) \\
        &= -\Vert \psi \Vert_{\infty}. 
    \end{split}
\end{equation*}Thus:
\begin{equation*}
    \vert u^{\varepsilon}(t, p)\vert \leq \Vert \psi \Vert_{\infty}.
\end{equation*}
By induction we deduce for all $(t, p)$:
\begin{equation*}
    \vert u^{\varepsilon} (t, p) \vert \leq \Vert \psi \Vert_{\infty}.
\end{equation*}

In this way, we may apply Lemma 4.2 in \cite{MPR} to get the convergence (up to a subsequence) of $u^{\varepsilon}$ to some continuous $u$, locally uniformly in $[0, T]\times \mathbb{G}$.

We now prove the final statement.  Taking $\varepsilon_j \to 0$ in \eqref{ineqlemmatimespaceforu}, we derive:
$$|u(t, p)-u(s, \hat{p})| \leq C(1+C_0[\psi_\delta^{\pm}]) |s-t|+C_\delta|p \cdot \hat{p}^{-1}|_\mathbb{G} + T\omega(|p \cdot \hat{p}^{-1}|_\mathbb{G})+ 2 \delta.$$Hence $u \in BUC([0, T]\times \mathbb{G})$.  Applying \eqref{ineqlemmatimespaceforu} to $s=0$ and $p=\hat{p}$, and taking the limit $\varepsilon_j \to 0$  we obtain:
$$|u(t, p)-\psi(p)| \leq C(1+C_0[\psi_\delta^{\pm}]) |t|+ 2 \delta.$$Letting $t \to 0$ and then $\delta \to 0$ it follows $u(0, p)=\psi(p)$.

\end{proof}

\begin{proposition}\label{overline is subsolution} The function $u$ is a viscosity subsolution of \eqref{problem}. 
\end{proposition}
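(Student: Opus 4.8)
The plan is to verify the subsolution condition of Definition~\ref{definition} for the operator $(t,p,r,\eta,\mathcal{X})\mapsto \mu r+\mathcal{F}(t,p,\eta,\mathcal{X})$, i.e. to show that for every $(t_0,p_0)\in(0,T)\times\mathbb{G}$ and every $(a,\eta,\xi,\mathcal{X})\in\overline{\mathcal{P}}^{2,+}u(t_0,p_0)$ one has $a+\mu u(t_0,p_0)+\mathcal{F}_*(t_0,p_0,\eta,\mathcal{X})\le 0$. Since $u$ is continuous and $\mathcal{F}_*$ is lower semicontinuous, a standard limiting argument reduces this to the case $(a,\eta,\xi,\mathcal{X})\in\mathcal{P}^{2,+}u(t_0,p_0)$; by the identification \eqref{IDENT} and a routine mollification of $\mathcal{C}^2_{sub}$ test functions (again using the lower semicontinuity of $\mathcal{F}_*$), it suffices to prove: \emph{if $\phi\in\mathcal{C}^{\infty}$ and $u-\phi$ has a strict local maximum at $(t_0,p_0)\in(0,T)\times\mathbb{G}$, then}
\[
\phi_t(t_0,p_0)+\mu u(t_0,p_0)+\mathcal{F}_*\big(t_0,p_0,\nabla_{\mathcal{G},0}\phi(t_0,p_0),\nabla^{2,*}_{\mathcal{G},0}\phi(t_0,p_0)\big)\le 0.
\]

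Fix such a $\phi$. By the preceding proposition $u^{\varepsilon_j}\to u$ locally uniformly, so a standard argument produces points $(t_j,p_j)\to(t_0,p_0)$ at which $u^{\varepsilon_j}-\phi$ attains a local maximum and with $u^{\varepsilon_j}(t_j,p_j)\to u(t_0,p_0)$; for $j$ large, $t_j\in(\varepsilon_j^{2},T)$, so the Dynamic Programming Principle applies at $(t_j,p_j)$. Writing $\varepsilon=\varepsilon_j$ and noting that $|\delta_\varepsilon(q)|_{\mathbb{G}}=\varepsilon|q|_{\mathbb{G}}\le\varepsilon^{3/4}$ for admissible $q$ (so the competing points $p_j\cdot\delta_\varepsilon(q)$ remain in the neighborhood where the maximum is attained), maximality gives $u^{\varepsilon}(t_j-\varepsilon^2,p_j\cdot\delta_\varepsilon(q))\le\phi(t_j-\varepsilon^2,p_j\cdot\delta_\varepsilon(q))+u^{\varepsilon}(t_j,p_j)-\phi(t_j,p_j)$ for all such $q$. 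Inserting this into the Dynamic Programming Principle and rearranging yields
\[
\mu\varepsilon^2\,u^{\varepsilon}(t_j,p_j)+\phi(t_j,p_j)\ \le\ \inf_{\eta,\mathcal{X}}\ \sup_{q}\ \Big\{\phi\big(t_j-\varepsilon^2,p_j\cdot\delta_\varepsilon(q)\big)+R^{\varepsilon}(t_j,p_j,q,\eta,\mathcal{X})\Big\}.
\]

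Next I would expand $\phi$ by the stratified Taylor formula \eqref{stratified taylos formula}. Since $q=(\nu,0)$ is horizontal, $[\delta_\varepsilon(q)]_1=\varepsilon\nu$ and $[\delta_\varepsilon(q)]_2=0$ (so the second-layer term, on which $\mathcal{F}$ does not depend anyway, drops out), and since $\phi$ is smooth the remainder is $o(\varepsilon^2)$ uniformly over $|q|_{\mathbb{G}}\le\varepsilon^{-1/4}$, exactly by the estimate used in \eqref{aprox secod order}. Combining with \eqref{running cost} and cancelling $\phi(t_j,p_j)$, the last inequality becomes
\[
\mu\varepsilon^2\,u^{\varepsilon}(t_j,p_j)+\varepsilon^2\phi_t(t_j,p_j)\ \le\ \inf_{\eta,\mathcal{X}}\sup_{q}\Big\{\varepsilon\langle\nabla_{\mathcal{G},0}\phi(t_j,p_j)-\eta,\nu\rangle+\frac{\varepsilon^2}{2}\langle(\nabla^{2,*}_{\mathcal{G},0}\phi(t_j,p_j)-\mathcal{X})\nu,\nu\rangle-\varepsilon^2\mathcal{F}(t_j,p_j,\eta,\mathcal{X})\Big\}+o(\varepsilon^2).
\]
I then bound the infimum from above by the choice $\mathcal{X}=\mathcal{X}_j:=\nabla^{2,*}_{\mathcal{G},0}\phi(t_j,p_j)$ and $\eta=\eta_j$, where $\eta_j:=\nabla_{\mathcal{G},0}\phi(t_j,p_j)$ when this is non-zero and otherwise $\eta_j$ is an arbitrary non-zero vector with $\|\eta_j\|\le\varepsilon_j^2$. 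These are admissible for the game when $j$ is large (the horizontal gradient and Hessian of $\phi$ are bounded near $(t_0,p_0)$), the second term in the braces then vanishes identically, and the first is $o(\varepsilon^2)$ uniformly in $q$ since $\|\nu\|\le|q|_{\mathbb{G}}\le\varepsilon^{-1/4}$. Dividing the resulting inequality $\mu\varepsilon_j^2 u^{\varepsilon_j}(t_j,p_j)+\varepsilon_j^2\phi_t(t_j,p_j)\le-\varepsilon_j^2\mathcal{F}(t_j,p_j,\eta_j,\mathcal{X}_j)+o(\varepsilon_j^2)$ by $\varepsilon_j^2$ and letting $j\to\infty$, using $u^{\varepsilon_j}(t_j,p_j)\to u(t_0,p_0)$, $\phi_t(t_j,p_j)\to\phi_t(t_0,p_0)$, $(\eta_j,\mathcal{X}_j)\to(\nabla_{\mathcal{G},0}\phi(t_0,p_0),\nabla^{2,*}_{\mathcal{G},0}\phi(t_0,p_0))$ and $\liminf_j\mathcal{F}(t_j,p_j,\eta_j,\mathcal{X}_j)\ge\mathcal{F}_*(t_0,p_0,\nabla_{\mathcal{G},0}\phi(t_0,p_0),\nabla^{2,*}_{\mathcal{G},0}\phi(t_0,p_0))$ (which is precisely the definition of the lower semicontinuous envelope), we obtain the desired inequality.

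The main obstacle is the treatment of the singular direction $\eta=0$: when $\nabla_{\mathcal{G},0}\phi(t_0,p_0)=0$ one cannot simply take $\eta=\nabla_{\mathcal{G},0}\phi(t_j,p_j)$ (it may itself vanish), and the relevant value of the operator is reached only through the envelope $\mathcal{F}_*$. This is dealt with by approaching $\eta=0$ along admissible non-zero $\eta_j\to 0$ chosen small enough that $\varepsilon_j\langle\nabla_{\mathcal{G},0}\phi(t_j,p_j)-\eta_j,\nu\rangle=o(\varepsilon_j^2)$ uniformly in $q$, followed by the $\liminf$ characterization of $\mathcal{F}_*$; hypothesis $(\mathcal{F}5)$ guarantees that the envelope is finite (indeed $0$) at $(t_0,p_0,0,\mathcal{O})$, which keeps the argument consistent. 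A secondary point is the uniformity in $q$ of the $o(\varepsilon^2)$-remainder of the stratified Taylor expansion; this uses crucially that $\delta_\varepsilon$ is horizontal (so the second-layer increment drops out) and that $\phi$ is smooth, just as in the proof of Lemma~\ref{lemmalipschitzJspace}.
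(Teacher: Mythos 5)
Your proof is essentially correct, but it follows a genuinely different route from the paper's. The paper argues by contradiction: assuming $\partial_t\varphi+\mu u+\mathcal{F}_*(\cdot,\nabla_{\mathcal{G},0}\varphi,\nabla^{2,*}_{\mathcal{G},0}\varphi)\ge\theta_0>0$ in a neighborhood $\overline{\mathcal{B}}_0$ of the strict local maximum $P_0$ of $u-\varphi$, it applies the DPP with the special choice $\eta=\nabla_{\mathcal{G},0}\varphi(P)$, $\mathcal{X}=\nabla^{2,*}_{\mathcal{G},0}\varphi(P)$ to obtain $((u^{\varepsilon})^*-\varphi)(P)\le(1+\mu\varepsilon^2)^{-1}((u^{\varepsilon})^*-\varphi)(P\cdot\delta_\varepsilon(q_0^{\varepsilon}))$, iterates this along the discrete game trajectory until it first exits $\overline{\mathcal{B}}_0$, and then contradicts strictness of the maximum in the limit $\varepsilon\to0$. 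You instead give the stability-type consistency argument (Barles--Souganidis in spirit): from local uniform convergence you extract near-maximizers $(t_j,p_j)\to P_0$ of $u^{\varepsilon_j}-\phi$, apply the DPP once at $(t_j,p_j)$, pick the competitor $(\eta_j,\mathcal{X}_j)$ equal to the derivatives of $\phi$ at $(t_j,p_j)$ (perturbed to a tiny nonzero $\eta_j$ when the horizontal gradient vanishes), and invoke the $\liminf$-characterization of $\mathcal{F}_*$ to pass to the limit directly. Your approach avoids the propagation-to-the-boundary step and the factor $\alpha=\lim(1+\mu\varepsilon^2)^{-K_\varepsilon}$ bookkeeping, at the cost of needing to produce converging near-maximizers; the paper's approach avoids perturbing $\eta$ in the singular case and keeps the whole discrete trajectory inside the neighborhood. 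Both arguments use the same two ingredients in the end: the uniform stratified Taylor remainder estimate \eqref{aprox secod order} (thanks to $\delta_\varepsilon$ being horizontal) and the upper bound on the $\inf\sup$ by the gradient/Hessian of the test function.

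One small imprecision: you write that $u^{\varepsilon_j}-\phi$ ``attains a local maximum'' at some $(t_j,p_j)$, but the value functions $u^{\varepsilon}$ are not a priori upper semicontinuous — this is precisely why the paper works with $(u^{\varepsilon})^*$ and inserts the approximating sequence $P_n$ before passing to a maximizer $q_0^{\varepsilon}$ in \eqref{ineqsubsolution}. The fix is routine: replace $u^{\varepsilon_j}$ by $(u^{\varepsilon_j})^*$ when locating the maximizers (uniform convergence carries over to the usc envelope since $(u^{\varepsilon_j})^*-u^{\varepsilon_j}\to0$ uniformly on compacts), and then approximate $(u^{\varepsilon_j})^*(t_j,p_j)$ by a sequence of actual values of $u^{\varepsilon_j}$ to invoke the DPP, exactly as in the displayed chain \eqref{ineqsubsolution}. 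With this correction your argument is complete.
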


\begin{proof} We argue by contradiction. Then there exist a positive constant $\theta_0$ and a smooth function $\varphi$, such that the following holds in a neighbourhood $\overline{\mathcal{B}}_0 := [t_0 - \delta, t_0 + \delta] \times \overline{B}_{\mathbb{G}}(p_0, r_0)$ of  a strict local maximal point $P_0 = (t_0, p_0) \in (0, T) \times \mathbb{G}$ of $u - \varphi$:
\begin{equation}\label{subsolutiontheta}
    \partial_t \varphi + \mu u+ \mathcal{F}_*(t, p, \nabla_{\mathcal{G}, 0} \varphi, \nabla^{2, *}_{\mathcal{G}, 0} \varphi) \geq \theta_0 > 0,
\end{equation}where $\delta$ and $r_0$ are sufficiently small, with:
\begin{equation}\label{choice deltaa}
4\delta < T.
\end{equation}We also assume: 
\begin{equation*}
    \max\limits_{\overline{\mathcal{B}}_0} (u - \varphi) = 0.
\end{equation*}

Let $P = (t, p) \in \overline{\mathcal{B}}_0$. Reasoning as in \eqref{aprox secod order},  we have:
\begin{equation*}
    \begin{split}
        u^{\varepsilon}(P) &= \left(\dfrac{1}{1 + \mu \varepsilon^2} \right)\inf_{\eta, \mathcal{X}} \sup_{q} \left\lbrace (u^{\varepsilon} - \varphi)(t- \varepsilon^2, p \cdot \delta_{\varepsilon}(q)) + \varphi(P) - \varepsilon^2 \partial_t \varphi(P) \right. \\
        &\qquad + \varepsilon \langle \nabla_{\mathcal{G},  0} \varphi(P) - \eta, \nu \rangle + \dfrac{\varepsilon^2}{2} \left\langle \left( \nabla^{2, *}_{\mathcal{G}, 0} \varphi(P) - \mathcal{X} \right) \nu, \nu \right\rangle  \left. - \varepsilon^2 \mathcal{F} \left(t, p, \eta, \mathcal{X} \right)  + o(\varepsilon^2) \right\rbrace.
    \end{split}
\end{equation*}In the sequel,  $\varepsilon$ is small enough so that: 
\begin{equation*}
    \Vert \nabla_{\mathcal{G},  0} \varphi \Vert_{\infty, \overline{\mathcal{B}}_0} \leq \varepsilon^{-1/4}, \,  \Vert \nabla^{2, *}_{\mathcal{G}, 0} \varphi \Vert_{\infty, \overline{\mathcal{B}}_0} \leq \varepsilon^{-1/2} \textnormal{ and }  o(\varepsilon^2) - \varepsilon^2 \theta_0 \leq 0.
\end{equation*}
Using $-\varphi \leq -u$ in $\overline{\mathcal{B}}_0$ we derive:
\begin{equation*}
    \begin{split}
        (u^{\varepsilon} - \varphi)(P) &\leq \left(\dfrac{1}{1 + \mu \varepsilon^2}\right) \inf_{\eta, \mathcal{X}} \sup_{q} \left\lbrace (u^{\varepsilon} - \varphi)(t- \varepsilon^2, p \cdot \delta_{\varepsilon}(q)) \right. \\
        &\qquad- \varepsilon^2 \left[ \partial_t \varphi(P) + \mu u(P) + \mathcal{F}_* \left(t, p,  \eta, \mathcal{X} \right) \right] \\
        &\qquad+ \left. \varepsilon \langle \nabla_{\mathcal{G}, 0} \varphi(P) - \eta, \nu \rangle + \dfrac{\varepsilon^2}{2} \left\langle \left( \nabla^{2, *}_{\mathcal{G}, 0} \varphi(P) - \mathcal{X} \right) \nu, \nu \right\rangle + o(\varepsilon^2) \right\rbrace.
    \end{split}
\end{equation*}Taking the special choices $\eta = \nabla_{\mathcal{G}, 0} \varphi(P)$, $\mathcal{X} = \nabla^{2, *}_{\mathcal{G}, 0} \varphi(P)$ and appealing to \eqref{subsolutiontheta} we deduce: 
\begin{equation*}
    \begin{split}
        (u^{\varepsilon} - \varphi)(P) &\leq \left(\dfrac{1}{1 + \mu \varepsilon^2}\right) \sup_{q} \left\lbrace (u^{\varepsilon} - \varphi)(t- \varepsilon^2, p \cdot \delta_{\varepsilon}(q)) +  o(\varepsilon^2) - \varepsilon^2 \theta_0 \right\rbrace \\
        &\leq \left(\dfrac{1}{1 + \mu \varepsilon^2}\right) \sup_{q} \left\lbrace (u^{\varepsilon} - \varphi)(t- \varepsilon^2, p \cdot \delta_{\varepsilon}(q)) \right\rbrace \\ & \leq \left(\dfrac{1}{1 + \mu \varepsilon^2}\right) \sup_{q} \left\lbrace ((u^{\varepsilon})^{*} - \varphi)(t- \varepsilon^2, p \cdot \delta_{\varepsilon}(q)) \right\rbrace ,
    \end{split}
\end{equation*}Taking a sequence of points $P_n=(t_n, p_n) \in \overline{\mathcal{B}}_0$ converging to $P$ so that:
$$(u^{\epsilon})^{*}(P)=\lim_{n\to \infty}u^{\varepsilon}(P_n)$$we derive:
\begin{equation}\label{ineqsubsolution}
\begin{split}
  ((u^{\varepsilon})^{*} - \varphi)(P) &=\lim_{n\to \infty}(u^{\varepsilon}-\varphi)(P_n) \\ &\leq \left(\dfrac{1}{1 + \mu \varepsilon^2}\right)\lim_{n\to \infty}\sup_{q} \left\lbrace ((u^{\varepsilon})^{*} - \varphi)(t_n- \varepsilon^2, p_n \cdot \delta_{\varepsilon}(q)) \right\rbrace \\ & = \left(\dfrac{1}{1 + \mu \varepsilon^2}\right)\lim_{n\to \infty} \left\lbrace ((u^{\varepsilon})^{*} - \varphi)(t_n- \varepsilon^2, p_n \cdot \delta_{\varepsilon}(q_n)) \right\rbrace \qquad \text{for some }|q_n|_\mathbb{G}\leq \varepsilon^{-1/4} \\ & \leq  \left(\dfrac{1}{1 + \mu \varepsilon^2}\right) ((u^{\varepsilon})^{*} - \varphi)(t- \varepsilon^2, p \cdot \delta_{\varepsilon}(q^{\varepsilon}_0))
  \end{split}
\end{equation}where $q_n \to q^{\varepsilon}_0$ (up to a subsequence that we do not re label) and where we have used the upper semicontinuity of $(u^{\varepsilon})^{*} - \varphi$.

Define $P_0^{\varepsilon}=P_0$ and, for $k\geq 1$,  $P_k^{\varepsilon}= (t_k^{\varepsilon}, p_k^{\varepsilon})$ as follows:
\begin{equation*}
    P_k^{\varepsilon} =  (t_{k-1}^{\varepsilon}-\varepsilon^{2},p_{k-1}^{\varepsilon}\cdot \delta_\varepsilon(q_0^{\varepsilon}(P_{k-1}^{\varepsilon})) ) , \qquad 1 \leq k \leq m.
\end{equation*}If $P_1^{\varepsilon}, P_2^{\varepsilon}, \ldots, P_k^{\varepsilon} \in \overline{\mathcal{B}}_0$, from \eqref{ineqsubsolution}, we obtain:
\begin{equation*}
        ((u^{\varepsilon})^{*} - \varphi)(P_{k - 1}^{\varepsilon}) \leq \left(\dfrac{1}{1 + \mu \varepsilon^2}\right) ((u^{\varepsilon})^* - \varphi)(P_k^{\varepsilon} ),
\end{equation*}
and so:
\begin{equation}\label{ineqsubsolution2}
        ((u^{\varepsilon})^{*} - \varphi)(P_0^{\varepsilon}) \leq \left( \dfrac{1}{1 + \mu \varepsilon^2} \right)^k ((u^{\varepsilon})^* - \varphi)(P_k^{\varepsilon}).
\end{equation}
 Taking $n = n^\varepsilon$ so that $n\varepsilon^{2} \in (\delta, 4\delta)$ it follows $t_n^{\varepsilon} \notin  [t_0 - \delta, t_0 + \delta]$ (i.e., $P_n^{\varepsilon} \notin \overline{\mathcal{B}}_0$).  In addition, $n \leq m$, by the choice \eqref{choice deltaa}. Hence, there exists a minimal number $K_\varepsilon\leq m$ such that $P_{K_\varepsilon}^{\varepsilon} \in \overline{\mathcal{B}}_0$ but $P_{K_\varepsilon+1}^{\varepsilon} \notin \overline{\mathcal{B}}_0$. By compactness,  $P_{K_\varepsilon}^{\varepsilon} \to P'=(t', p') \in \overline{\mathcal{B}}_0 \backslash \lbrace P_0 \rbrace$ as $\varepsilon \to 0$ (or equivalently $m \to \infty$).  Note that:

\begin{equation*}
    \begin{split}
        0 < e^{-\mu T} = e^{-\mu m \varepsilon^2} &\leq \left( 1 + \mu \varepsilon^2 \right)^{-m} \leq \left( 1 + \mu \varepsilon^2 \right)^{-K_\varepsilon} \leq 1.
    \end{split}
\end{equation*}Thus: 
\begin{equation*}
    \lim\limits_{\substack{\varepsilon \to 0 \\ (m \to \infty)}} \left( \dfrac{1}{1 + \mu \varepsilon^2} \right)^{K_{\varepsilon}} := \alpha \in (0, 1].
\end{equation*}
Consequently 
\begin{equation*}
    \begin{split}
        0 = (u - \varphi)(P_0) &= \lim\limits_{\varepsilon \to 0} (u^{\varepsilon} - \varphi)(P_0^{\varepsilon}) \\
        &\leq \lim\limits_{\varepsilon \to 0} \left( \dfrac{1}{1 + \mu \varepsilon^2} \right)^{K_\varepsilon} ((u^{\varepsilon})^* - \varphi)(P_{K_\varepsilon}^{\varepsilon}) \\
        &\leq \alpha (u - \varphi)(P'), \textnormal{ with } P' \in \overline{\mathcal{B}}_0 \backslash \lbrace P_0 \rbrace.
    \end{split}
\end{equation*}
Therefore we get a contradiction, since $P_0$ is a strict maximum in $\overline{\mathcal{B}}_0$.
\end{proof}

\begin{proposition} The function $u$ is a viscosity supersolution of \eqref{problem}. 
\end{proposition}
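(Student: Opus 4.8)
The plan is to argue by contradiction, reproducing the scheme of Proposition~\ref{overline is subsolution} with the roles of the two players---and hence of $\inf$ and $\sup$---interchanged, so that now it is the \emph{minimizing} player (Player I) whose near-optimal response must be produced via Lemma~\ref{lemmafundamental}. Assume $u$ is not a supersolution of \eqref{problem}. Then there are a smooth $\varphi$, a constant $\theta_0>0$ and a \emph{strict} local minimum point $P_0=(t_0,p_0)\in(0,T)\times\mathbb{G}$ of $u-\varphi$, which we normalize so that $\min_{\overline{\mathcal{B}}_0}(u-\varphi)=0$ on a neighbourhood $\overline{\mathcal{B}}_0:=[t_0-\delta,t_0+\delta]\times\overline{B}_{\mathbb{G}}(p_0,r_0)$ with $\delta,r_0$ small, $4\delta<T$ as in \eqref{choice deltaa} and $\delta<t_0$, and
\begin{equation}\label{supersoltheta}
\partial_t\varphi+\mu u+\mathcal{F}^*\!\left(t,p,\nabla_{\mathcal{G},0}\varphi,\nabla^{2,*}_{\mathcal{G},0}\varphi\right)\le-\theta_0<0\qquad\text{on }\overline{\mathcal{B}}_0 .
\end{equation}
Here one should distinguish: if $\nabla_{\mathcal{G},0}\varphi(P_0)\ne 0$, shrink $\overline{\mathcal{B}}_0$ so that $\|\nabla_{\mathcal{G},0}\varphi\|\ge K$ on $\overline{\mathcal{B}}_0$ for some $K>0$; if $\nabla_{\mathcal{G},0}\varphi(P_0)=0$, shrink instead so that $\|\nabla_{\mathcal{G},0}\varphi\|<K$ on $\overline{\mathcal{B}}_0$ and read \eqref{supersoltheta} with its third argument replaced by $0$, which is legitimate near $P_0$ by the upper semicontinuity of $\mathcal{F}^*$ in all its variables together with $(\mathcal{F}5)$. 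In either case put $R_0:=\|\nabla^{2,*}_{\mathcal{G},0}\varphi\|_{\infty,\overline{\mathcal{B}}_0}$.

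Next I would run the Dynamic Programming Principle backwards from $P_0$. For $P=(t,p)\in\overline{\mathcal{B}}_0$, expanding $\varphi(t-\varepsilon^2,p\cdot\delta_\varepsilon(q))$ by the stratified Taylor formula exactly as in \eqref{aprox secod order} (the $V_2$-part of $\delta_\varepsilon(q)$ being zero since $q$ is horizontal), one gets
\begin{equation*}
\begin{split}
(1+\mu\varepsilon^2)(u^\varepsilon-\varphi)(P)=\inf_{\eta,\mathcal{X}}\sup_{q}\Big\{&(u^\varepsilon-\varphi)(t-\varepsilon^2,p\cdot\delta_\varepsilon(q))-\mu\varepsilon^2\varphi(P)-\varepsilon^2\partial_t\varphi(P)+\varepsilon\langle\nabla_{\mathcal{G},0}\varphi(P)-\eta,\nu\rangle\\[2pt]
&+\tfrac{\varepsilon^2}{2}\langle(\nabla^{2,*}_{\mathcal{G},0}\varphi(P)-\mathcal{X})\nu,\nu\rangle-\varepsilon^2\mathcal{F}(t,p,\eta,\mathcal{X})+o(\varepsilon^2)\Big\}.
\end{split}
\end{equation*}
Using $-\mu\varepsilon^2\varphi(P)\ge-\mu\varepsilon^2 u(P)$ (since $\varphi\le u$ on $\overline{\mathcal{B}}_0$) and then, for \emph{each} admissible pair $(\eta,\mathcal{X})$, invoking Lemma~\ref{lemmafundamental} with $\hat\eta=\nabla_{\mathcal{G},0}\varphi(P)$, $\hat{\mathcal{X}}=\nabla^{2,*}_{\mathcal{G},0}\varphi(P)$ and the above $K,R_0$ to select $\overline{q}=\overline{q}(\varepsilon,\eta,\hat\eta,\mathcal{X},\hat{\mathcal{X}})$, $|\overline{q}|_{\mathbb{G}}\le\varepsilon^{-1/4}$, with
\[
\varepsilon\langle\hat\eta-\eta,\overline\nu\rangle+\tfrac{\varepsilon^2}{2}\langle(\hat{\mathcal{X}}-\mathcal{X})\overline\nu,\overline\nu\rangle-\varepsilon^2\mathcal{F}(t,p,\eta,\mathcal{X})\ge-\varepsilon^2\mathcal{F}^*(t,p,\hat\eta,\hat{\mathcal{X}})-\varepsilon^2 h_K(\varepsilon^{1/4})
\]
(its gradient argument replaced by $0$ in the degenerate branch), I bound $\sup_q\{\cdots\}$ below by its value at $\overline q$, apply \eqref{supersoltheta} and take $\varepsilon$ so small that $h_K(\varepsilon^{1/4})\le\theta_0/2$ and $|o(\varepsilon^2)|\le\varepsilon^2\theta_0/2$; since the resulting lower bound no longer depends on $(\eta,\mathcal{X})$, the $\inf_{\eta,\mathcal{X}}$ drops out and $(u^\varepsilon-\varphi)(P)\ge(1+\mu\varepsilon^2)^{-1}\inf_{|q|_{\mathbb{G}}\le\varepsilon^{-1/4}}(u^\varepsilon-\varphi)(t-\varepsilon^2,p\cdot\delta_\varepsilon(q))$. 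Replacing $u^\varepsilon$ by its lower semicontinuous envelope $(u^\varepsilon)_*$---which still converges to $u$ locally uniformly, $u$ being continuous---and exploiting that the infimum over the compact ball $\{|q|_{\mathbb{G}}\le\varepsilon^{-1/4}\}$ of the l.s.c.\ map $q\mapsto((u^\varepsilon)_*-\varphi)(t-\varepsilon^2,p\cdot\delta_\varepsilon(q))$ is attained, together with the lower semicontinuity of $(u^\varepsilon)_*-\varphi$ in $P$, I obtain a selection $P\mapsto q_0^\varepsilon(P)$ with
\begin{equation}\label{supdpp}
((u^\varepsilon)_*-\varphi)(P)\ge\frac{1}{1+\mu\varepsilon^2}\,((u^\varepsilon)_*-\varphi)\big(t-\varepsilon^2,\,p\cdot\delta_\varepsilon(q_0^\varepsilon(P))\big),\qquad P\in\overline{\mathcal{B}}_0 .
\end{equation}

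Then I would iterate \eqref{supdpp} along the trajectory $P_0^\varepsilon=P_0$, $P_k^\varepsilon=(t_{k-1}^\varepsilon-\varepsilon^2,\,p_{k-1}^\varepsilon\cdot\delta_\varepsilon(q_0^\varepsilon(P_{k-1}^\varepsilon)))$, exactly as in \eqref{ineqsubsolution2}: as long as $P_1^\varepsilon,\dots,P_k^\varepsilon\in\overline{\mathcal{B}}_0$,
\[
((u^\varepsilon)_*-\varphi)(P_0)\ge\Big(\tfrac{1}{1+\mu\varepsilon^2}\Big)^k((u^\varepsilon)_*-\varphi)(P_k^\varepsilon).
\]
The time coordinate decreases by $\varepsilon^2$ at each step, so for $n=n^\varepsilon$ with $n\varepsilon^2\in(\delta,4\delta)$ one has $t_n^\varepsilon\notin[t_0-\delta,t_0+\delta]$ while $n\le m$ by \eqref{choice deltaa}; hence there is a minimal $K_\varepsilon\le m$ with $P_{K_\varepsilon}^\varepsilon\in\overline{\mathcal{B}}_0$ and $P_{K_\varepsilon+1}^\varepsilon\notin\overline{\mathcal{B}}_0$. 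Since each spatial jump has gauge $|\delta_\varepsilon(q_0^\varepsilon)|_{\mathbb{G}}=\varepsilon|q_0^\varepsilon|_{\mathbb{G}}\le\varepsilon^{3/4}\to 0$, compactness gives $P_{K_\varepsilon}^\varepsilon\to P'\in\overline{\mathcal{B}}_0\setminus\{P_0\}$ along a subsequence, and $(1+\mu\varepsilon^2)^{-K_\varepsilon}\to\alpha\in[e^{-\mu T},1]$; passing to the limit with $(u^\varepsilon)_*\to u$ locally uniformly yields $0=(u-\varphi)(P_0)\ge\alpha(u-\varphi)(P')$, contradicting $(u-\varphi)(P')>0$.

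The main obstacle is the same as in Proposition~\ref{overline is subsolution}: in the supersolution case one cannot simply take $\eta=\nabla_{\mathcal{G},0}\varphi$, $\mathcal{X}=\nabla^{2,*}_{\mathcal{G},0}\varphi$---that choice belongs to the minimizing player---so instead, for \emph{every} pair $(\eta,\mathcal{X})$ the minimizer might select, one needs a single horizontal direction $\overline q$ cancelling the linear term $\varepsilon\langle\hat\eta-\eta,\nu\rangle$, dominating the quadratic term $\tfrac{\varepsilon^2}{2}\langle(\hat{\mathcal{X}}-\mathcal{X})\nu,\nu\rangle$ (through the $\mathcal{E}^{+}$ ellipticity in $(\mathcal{F}3)$ and the bound $R_0$) and dominating $-\varepsilon^2\mathcal{F}(t,p,\eta,\mathcal{X})$ (through $(\mathcal{F}4)$, once $\|\mathcal{X}\|$ has been reduced to $R_0$ via $(\mathcal{F}3)$, in the regime $\|\eta\|\gtrsim K$, with the constraint $|\overline q|_{\mathbb{G}}\le\varepsilon^{-1/4}$ respected for $\varepsilon$ small); this is precisely the content of Lemma~\ref{lemmafundamental}, so the hard analytic work is already in place. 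The remaining delicate points---the degenerate branch $\nabla_{\mathcal{G},0}\varphi(P_0)=0$, handled via $(\mathcal{F}5)$ and the upper semicontinuity of $\mathcal{F}^*$, and the systematic passage to the lower envelope $(u^\varepsilon)_*$, needed because $u^\varepsilon$ is not known to be continuous---are routine mirror images of arguments already carried out in Proposition~\ref{overline is subsolution} and Lemma~\ref{lemmalipschitzJspace}.
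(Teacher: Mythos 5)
Your proposal is correct and follows essentially the same route as the paper's proof: mirror the subsolution argument with $\inf$ and $\sup$ swapped, split into the nondegenerate case $\nabla_{\mathcal{G},0}\varphi(P_0)\neq 0$ (handled via part~(1) of Lemma~\ref{lemmafundamental} and assumption $(\mathcal{F}4)$) and the degenerate case $\nabla_{\mathcal{G},0}\varphi(P_0)=0$ (handled via part~(2) and $(\mathcal{F}5)$), pass to the lower envelope $(u^\varepsilon)_*$, select the minimizing $q_0^\varepsilon(P)$ by compactness of the gauge ball and lower semicontinuity, and iterate exactly as in \eqref{ineqsubsolution2}. The only deviation is a notational slip: you write $\|\nabla_{\mathcal{G},0}\varphi\|\ge K$ and $\|\nabla_{\mathcal{G},0}\varphi\|<K$ for the two branches, whereas Lemma~\ref{lemmafundamental} is stated with the threshold $K^{-1}$, $K\in\mathbb{N}$ (the paper uses $\gamma_0>1/j_0$); this is harmless once the parameter is renamed.
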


\begin{proof}  Reasoning by contradiction again,  there exist  $\theta_0> 0$ and a smooth function $\varphi$, such that the following holds in a neighbourhood $\overline{\mathcal{B}}_0$ of a strict local minimal point $P_0 = (t_0, p_0) \in (0, T) \times \Omega$ of $u - \varphi$:
\begin{equation}\label{supersolutiontheta}
    \partial_t \varphi + \mu u + \mathcal{F}^*(t, p, \nabla_{\mathcal{G}, 0} \varphi, \nabla^{2, *}_{\mathcal{G}, 0} \varphi)  \leq - \theta_0 < 0.
\end{equation}We assume that the value of $u - \varphi$ at $P_0$ is $0$.

Let $R_0 > 0$ be so that:
\begin{equation*}
    \Vert \nabla_{\mathcal{G}, 0} \varphi \Vert_{\infty, \overline{\mathcal{B}}_0}, \Vert \nabla^{2, *}_{\mathcal{G}, 0} \varphi \Vert_{\infty, \overline{\mathcal{B}}_0} \leq R_0.
\end{equation*}
Suppose first that $\nabla_{\mathcal{G}, 0} \varphi(P_0) \neq 0$. We may assume that there is $\gamma_0 > 0$ such that: 
\begin{equation*}
    \Vert \nabla_{\mathcal{G}, 0} \varphi \Vert_{\infty, \overline{\mathcal{B}}_0} \geq \gamma_0 > 0
\end{equation*}Hence there exists  $j_0 \in \mathbb{N}$ such that $\gamma_0 > 1/j_0$. By Lemma \ref{lemmafundamental}  and performing a Taylor expansion as in \eqref{aprox secod order}, we have for $P=(t, p)\in \overline{\mathcal{B}}_0$:
\begin{equation*}
    \begin{split}
        u^{\varepsilon}(P) &\geq \dfrac{1}{1 + \mu \varepsilon^2} \inf_{\eta, \mathcal{X}} \left\lbrace (u^{\varepsilon} - \varphi)((t- \varepsilon^2, p \cdot \delta_{\varepsilon}(\overline{q})) + \varphi(P) - \varepsilon^2 \partial_t \varphi(P) \right.  \\
        &\qquad  - \varepsilon^2 \mathcal{F}^* \left(t, p, \nabla_{\mathcal{G}, 0} \varphi(P), \nabla^{2, *}_{\mathcal{G}, 0} \varphi(P)\right) \left. - \varepsilon^2 h_{j_0}(\varepsilon^{1/4}) + o(\varepsilon^2) \right\rbrace,
        \end{split}
\end{equation*}for some $\overline{q} \in \mathbb{G}$, with $\vert \overline{q} \vert_{\mathbb{G}} \leq \varepsilon^{-1/4}$. So we obtain:
\begin{equation}\label{long calc12}
    \begin{split}
        (u^{\varepsilon} -\varphi)(P) &\geq \dfrac{1}{1 + \mu \varepsilon^2} \inf_{\eta, \mathcal{X}} \left\lbrace (u^{\varepsilon} - \varphi)(t- \varepsilon^2, p \cdot \delta_{\varepsilon}(\overline{q})) \right. - \varepsilon^2 \left[ \partial_t \varphi(P) + \mu u(P) \right. \\ 
        &\qquad \left. + \mathcal{F}^* \left(t, p, \nabla_{\mathcal{G}, 0} \varphi(P), \nabla^{2, *}_{\mathcal{G}, 0} \varphi(P)\right) \right] \left. - \varepsilon^2 h_{j_0}(\varepsilon^{1/4}) + o(\varepsilon^2) \right\rbrace \\
        & \geq \dfrac{1}{1 + \mu \varepsilon^2} \inf_{\eta, \mathcal{X}} \left\lbrace (u^{\varepsilon} - \varphi)(t- \varepsilon^2, p \cdot \delta_{\varepsilon}(\overline{q})) + \varepsilon^2 \left[ \theta_0 + o(1) - h_{j_0}(\varepsilon^{1/4}) \right] \right\rbrace \\ & \geq \dfrac{1}{1 + \mu \varepsilon^2} \inf_{\eta, \mathcal{X}} \left\lbrace (u^{\varepsilon} - \varphi)(t- \varepsilon^2, p \cdot \delta_{\varepsilon}(\overline{q}))\right\rbrace \\ & \geq \dfrac{1}{1 + \mu \varepsilon^2} \inf_{\eta, \mathcal{X}} \left\lbrace ((u^{\varepsilon})_{*} - \varphi)(t- \varepsilon^2, p \cdot \delta_{\varepsilon}(\overline{q}))\right\rbrace,
        \end{split}
\end{equation}
where we used $-\varphi \geq -u$ in $\overline{\mathcal{B}}_0$ and \eqref{supersolutiontheta}. By a similar argument as in \eqref{ineqsubsolution}, we derive from \eqref{long calc12} that:
\begin{equation}\label{long calc}
((u^{\varepsilon})_{*} -\varphi)(P) \geq \dfrac{1}{1 + \mu \varepsilon^2} \inf_{\eta, \mathcal{X}} \left\lbrace ((u^{\varepsilon})_{*} - \varphi)(t- \varepsilon^2, p \cdot \delta_{\varepsilon}(\overline{q}))\right\rbrace.
\end{equation}

 Observe that since for each admissible $\eta$ and $\mathcal{X}$ we have  $|\overline{q}|_\mathbb{G} \leq \varepsilon^{-1/4}$, and the function $((u^{\varepsilon})_{*}-\varphi)(t- \varepsilon^2, p \cdot \delta_{\varepsilon}(\cdot))$ is lower semicontinuous, the infimum:
$$\inf_{|q|_\mathbb{G}\leq \varepsilon^{-1/4}} \left\lbrace ((u^{\varepsilon})_{*} - \varphi)(t- \varepsilon^2, p \cdot \delta_{\varepsilon}(\overline{q}))\right\rbrace$$is finite. Hence, so is:
$$\inf_{\eta, \mathcal{X}} \left\lbrace ((u^{\varepsilon})_{*} - \varphi)(t- \varepsilon^2, p \cdot \delta_{\varepsilon}(\overline{q}))\right\rbrace.$$Take a sequence $(\eta_n, \mathcal{X}_n)$  so that if $\overline{q}_n=\overline{q}_n(\varepsilon, \varphi,  P, \eta_n, \mathcal{X}_n)$, then:
$$\lim_{n \to \infty}((u^{\varepsilon})_{*}-\varphi)(t- \varepsilon^2, p \cdot \delta_{\varepsilon}(\overline{q}_n))=\inf_{\eta, \mathcal{X}} \left\lbrace ((u^{\varepsilon})_{*} - \varphi)(t- \varepsilon^2, p \cdot \delta_{\varepsilon}(\overline{q}))\right\rbrace.$$By compactness, there is a point $\overline{q}_0^{\varepsilon}=\overline{q}_0^{\varepsilon}(P)$, $\vert \overline{q}_0^{\varepsilon}\vert_\mathbb{G} \leq \varepsilon^{-1/4}$, so that:

$$  \overline{q}_0^{\varepsilon}= \lim\limits_{n \to \infty}\overline{q}_n.$$The lower semicontinuity of $((u^{\varepsilon})_* - \varphi)(t-\varepsilon^{2}, p \cdot \delta_\varepsilon(\cdot))$ yields:
$$\lim_{n \to \infty}(u^{\varepsilon})_{*}-\varphi)(t- \varepsilon^2, p \cdot \delta_{\varepsilon}(\overline{q}_n)) \geq ((u^{\varepsilon})_{*}-\varphi)(t- \varepsilon^2, p \cdot \delta_{\varepsilon}(\overline{q}^{\varepsilon}_0)).$$Thus, by \eqref{long calc}, we derive:
\begin{equation}\label{new eq}
  ((u^{\varepsilon})_{*} -\varphi)(P) \geq  \left(  \dfrac{1}{1 + \mu \varepsilon^2}\right)((u^{\varepsilon})_{*}-\varphi)(t- \varepsilon^2, p \cdot \delta_{\varepsilon}(\overline{q}^{\varepsilon}_0)).
  \end{equation}

Next, we consider the case $\nabla_{\mathcal{G}, 0} \varphi(P_0) = 0$. Let $\mathbf{F} : \overline{\mathcal{B}}_0 \rightarrow \mathbb{R}$ be so that:
\begin{equation*}
    \mathbf{F}(\cdot):= \partial_t \varphi(\cdot) + \mu u(\cdot) + \mathcal{F}^*(\cdot, 0, \nabla^{2, *}_{\mathcal{G}, 0}\varphi(\cdot)).
\end{equation*}We can assume $\mathbf{F}(P) \leq -\theta_0$ and that $\|\nabla_{\mathcal{G}, 0}\varphi(P)\| \leq 1/j $ for any $P \in \overline{\mathcal{B}}_0$ and some positive integer $j$. 
Applying Lemma \ref{lemmafundamental}, for any $P \in \overline{\mathcal{B}}_0$ and any $\eta, \mathcal{X}$ admissible, there exists $\overline{q}_0^{\varepsilon}$, with $\vert \overline{q}_0^{\varepsilon} \vert_{\mathbb{G}} \leq \varepsilon^{-1/4}$, such that:

    \begin{equation*}
        \begin{split}
            (u^{\varepsilon} - \varphi)(P) &\geq \dfrac{1}{1 + \mu \varepsilon^2} \inf_{\eta, \mathcal{X}} \left\lbrace (u^{\varepsilon} - \varphi)(t-\varepsilon^{2}, p\cdot \delta_\varepsilon(\overline{q}_0^{\varepsilon})) - \varepsilon^2 \left[ \partial_t \varphi(P) + \mu u(P) \right. \right. \\
            &\qquad \left. +\mathcal{F}^* \left(t, p, 0, \nabla^{2, *}_{\mathcal{G},  0} \varphi(P) \right) \right] \left. + o(\varepsilon^2) \right\rbrace \\& \geq \dfrac{1}{1 + \mu \varepsilon^2} \inf_{|q|_\mathbb{G}\leq \varepsilon^{-1/4}} \left\lbrace ((u^{\varepsilon})_* - \varphi)(t-\varepsilon^{2}, p\cdot \delta_\varepsilon(q)) - \varepsilon^2 \textbf{F}(P) + o(\varepsilon^2) \right\rbrace \\ & \geq \dfrac{1}{1 + \mu \varepsilon^2} \inf_{|q|_\mathbb{G}\leq \varepsilon^{-1/4}} \left\lbrace ((u^{\varepsilon})_* - \varphi)(t-\varepsilon^{2}, p\cdot \delta_\varepsilon(q)) \right\rbrace
        \end{split}
    \end{equation*}Thus, there is $q_0^{\varepsilon}=q_0^{\varepsilon}(P)$ where the latter infimum is attained.  Hence  \eqref{new eq} hold.

We may proceed as in the end of Proposition \ref{overline is subsolution} (right after \eqref{ineqsubsolution}) to get a contradiction with the fact that $P_0$ is a strict minimum. 
\end{proof}

\section{Appendix: A  technical lemma for existence}

 We provide the proof of the next lemma which is  \cite[Lemma 4.6]{Kasai}. We give full details to  show  the key point  that $\hat{q}$ may be taken horizontal.

\begin{lemma}\label{lemmafundamental}
    Let $(\hat{\eta}, \hat{\mathcal{X}}) \in \mathbb{R}^{m_1} \times \mathcal{S}^{m_1}\left( \mathbb{R} \right)$ and let $R_0$ so that $\Vert \hat{\eta} \Vert, \Vert \hat{\mathcal{X}} \Vert \leq R_0$.
    \begin{enumerate}
    \item If $\Vert \hat{\eta} \Vert \geq K^{-1}$ ($K \in \mathbb{N}$), then there exists $\varepsilon_1 = \varepsilon_1(K, R_0, \lambda_0, \lambda_1)$ such that for all $(\eta, \mathcal{X}) \in \left( \mathbb{R}^{m_1} \backslash \lbrace 0 \rbrace \right) \times \mathcal{S}^{m_1}\left( \mathbb{R} \right)$, with $\Vert \eta \Vert \leq \varepsilon^{-1/4}, \Vert \mathcal{X} \Vert \leq \varepsilon^{-1/2}$, there exists $\overline{q} = \overline{q}(\varepsilon, \eta, \hat{\eta}, \mathcal{X}, \hat{\mathcal{X}})$, $\overline{q} = (\overline{\nu}, 0)$, with $\vert \overline{q} \vert_{\mathbb{G}} \leq \varepsilon^{-1/4}$ such that for all  $\varepsilon \leq \varepsilon_1$ and all $(t, p)$:
    \begin{equation}\label{ineqLemmaRpart1}
        R^{\varepsilon}(t, p, \overline{q}, \eta, \mathcal{X}) \geq R^{*, \varepsilon}(t, p, \overline{q}, \hat{\eta}, \hat{\mathcal{X}}) - \varepsilon^2 h_K(\varepsilon^{1/4}),
    \end{equation}where $R^{*, \varepsilon}$ is defined as  $R^{\varepsilon}$ changing $\mathcal{F}$ by $\mathcal{F}^*$ and $ h_K(r):=\omega_{(1/2)K, R_0}(r),  \textnormal{ with } r \geq 0.$

    \item If $\Vert \hat{\eta} \Vert \leq K^{-1}$ ($K \in \mathbb{N}$), then there exists $\varepsilon_2 = \varepsilon_2(K, R_0, \lambda_0, \lambda_1)$ such that for all $(\eta, \mathcal{X}) \in \left( \mathbb{R}^{m_1} \backslash \lbrace 0 \rbrace \right) \times \mathcal{S}^{m_1}\left( \mathbb{R} \right)$, with $\Vert \eta \Vert \leq \varepsilon^{-1/4}, \Vert \mathcal{X} \Vert \leq \varepsilon^{-1/2}$, there exists $\overline{q} = \overline{q}(\varepsilon, \eta, \hat{\eta}, \mathcal{X}, \hat{\mathcal{X}})$, $\overline{q} = (\overline{\nu}, 0)$, with $\vert \overline{q} \vert_{\mathbb{G}} \leq \varepsilon^{-1/4}$ such that for  each $\varepsilon \leq \varepsilon_2$ and all $(t, p)$:
    \begin{equation}\label{ineqLemmaRpart2}
        R^{\varepsilon}(t, p, \overline{q}, \eta, \mathcal{X}) \geq R^{*, \varepsilon}(t, p, \overline{q}, 0, \hat{\mathcal{X}}).
    \end{equation}
    \end{enumerate}
\end{lemma}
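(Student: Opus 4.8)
Up to the point the authors single out, this is \cite[Lemma 4.6]{Kasai}, so the plan is to re-run that argument inside $\mathbb{G}$ after one observation. Since
$$
R^{\varepsilon}(t,p,q,\eta,\mathcal{X})=-\varepsilon\langle\eta,\nu\rangle-\frac{\varepsilon^{2}}{2}\langle\mathcal{X}\nu,\nu\rangle-\varepsilon^{2}\mathcal{F}(t,p,\eta,\mathcal{X})
$$
depends on $q=(\nu,0,\dots,0)$ only through its first--layer part $\nu$, one may look for $\overline{q}$ of the form $(\overline{\nu},0)$ with $\overline{\nu}\in\mathbb{R}^{m_{1}}$. For such a \emph{horizontal} element the gauge \eqref{Heisenberggauge} collapses to $|\overline{q}|_{\mathbb{G}}=\|\overline{\nu}\|_{\ell^{1}}\le\sqrt{m_{1}}\,\|\overline{\nu}\|_{\ell^{2}}$, so the admissibility requirement $|\overline{q}|_{\mathbb{G}}\le\varepsilon^{-1/4}$ becomes a plain Euclidean bound $\|\overline{\nu}\|\le m_{1}^{-1/2}\varepsilon^{-1/4}$. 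This is exactly the delicate point: higher--layer components of $q$ enter the gauge through the exponents $1/j$, are useless in $R^{\varepsilon}$, and only shrink the admissible set, so nothing is lost by discarding them, and after this reduction the stratified structure disappears and the whole construction is purely Euclidean in $\mathbb{R}^{m_{1}}$.

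\textbf{The ansatz for $\overline{\nu}$.} Put $\Delta\eta:=\hat{\eta}-\eta$ and $\mathcal{Z}:=\hat{\mathcal{X}}-\mathcal{X}$. Subtracting the two running costs,
$$
R^{\varepsilon}(t,p,\overline{q},\eta,\mathcal{X})-R^{*,\varepsilon}(t,p,\overline{q},\hat{\eta},\hat{\mathcal{X}})=\varepsilon\langle\Delta\eta,\overline{\nu}\rangle+\frac{\varepsilon^{2}}{2}\langle\mathcal{Z}\,\overline{\nu},\overline{\nu}\rangle-\varepsilon^{2}\bigl[\mathcal{F}(t,p,\eta,\mathcal{X})-\mathcal{F}^{*}(t,p,\hat{\eta},\hat{\mathcal{X}})\bigr],
$$
and likewise with $\hat{\eta}$ replaced by $0$ in part (2). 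The plan is to take $\overline{\nu}=s_{1}e+c_{2}w$, where $e$ is a unit eigenvector of $\mathcal{Z}$ for its largest eigenvalue, $w$ a unit vector along $\Delta\eta$ (along $-\eta$ in part (2)), $s_{1}=\pm\sqrt{\lambda_{1}^{2}+1}$ with the sign chosen so that $\langle\Delta\eta,s_{1}e\rangle\ge0$, and $c_{2}\in[0,m_{1}^{-1/2})$ a small constant; one sets $s_{1}=0$ when $\mathcal{E}^{+}(\mathcal{Z})=0$ and $c_{2}=0$ when $\Delta\eta=0$. The eigenvector piece produces $\frac{\varepsilon^{2}}{2}s_{1}^{2}\mathcal{E}^{+}(\mathcal{Z})$, designed to dominate — via $(\mathcal{F}3)$ — the Hessian--gap term $\frac{\varepsilon^{2}\lambda_{1}^{2}}{2}\mathcal{E}^{+}(\mathcal{Z})$ (only of order $\varepsilon^{3/2}$, since $\|\mathcal{Z}\|\le R_{0}+\varepsilon^{-1/2}$, yet not negligible); the $w$--piece produces a first--order gain comparable to $\varepsilon c_{2}\|\Delta\eta\|$, designed to absorb the rest of the $\mathcal{F}$--discrepancy, while $\|\overline{\nu}\|$ stays bounded and hence fits the gauge ball once $\varepsilon$ is small.

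\textbf{Bounding the $\mathcal{F}$--discrepancy uniformly in $(t,p)$.} In part (1), $\|\hat{\eta}\|\ge K^{-1}>0$, so $\mathcal{F}$ is continuous at $(\cdot,\hat{\eta},\hat{\mathcal{X}})$ by $(\mathcal{F}1)$ and $\mathcal{F}^{*}=\mathcal{F}$ there. If $\|\eta\|\ge\frac{1}{2}K^{-1}$, insert $\mathcal{F}(t,p,\eta,\hat{\mathcal{X}})$ and estimate the first difference by $(\mathcal{F}3)$ and the second by $(\mathcal{F}4)$ with $R=R_{0}$ and the appropriate lower threshold, i.e.\ by $\frac{\lambda_{1}^{2}}{2}\mathcal{E}^{+}(\mathcal{Z})+h_{K}(\|\Delta\eta\|)$; since $h_{K}(\|\Delta\eta\|)\le h_{K}(\varepsilon^{1/4})$ when $\|\Delta\eta\|\le\varepsilon^{1/4}$ and otherwise the $w$--piece outweighs the surplus, this closes the bound. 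If $\|\eta\|<\frac{1}{2}K^{-1}$ then $\|\Delta\eta\|\ge\frac{1}{2}K^{-1}$ is bounded below and $(\mathcal{F}4)$ is unavailable; here one uses instead the linear growth $|\mathcal{F}|\le C(1+\|\mathcal{X}\|)\le C(1+\varepsilon^{-1/2})$ of Remark~\ref{remarkhipotesysofFandH} to bound the whole $\varepsilon^{2}$--discrepancy by a quantity of order $\varepsilon^{3/2}$, which the $w$--piece absorbs because $\varepsilon c_{2}\|\Delta\eta\|$ is bounded below by a fixed multiple of $\varepsilon$. In part (2) one again peels off $\frac{\lambda_{1}^{2}}{2}\mathcal{E}^{+}(\mathcal{Z})$ by $(\mathcal{F}3)$ and controls the leftover $\varepsilon^{2}[\mathcal{F}(t,p,\eta,\hat{\mathcal{X}})-\mathcal{F}^{*}(t,p,0,\hat{\mathcal{X}})]$ by a quantity of order $\varepsilon^{2}$, using the same growth bound together with $(\mathcal{F}5)$ and the degenerate ellipticity of $\mathcal{F}^{*}$ (the upper envelope of a degenerate elliptic operator is degenerate elliptic); since part (2) allows no error term, the eigenvector piece must carry the Hessian gap exactly and the $w$--piece must strictly beat the order--$\varepsilon^{2}$ remainder. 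Collecting the resulting smallness requirements produces $\varepsilon_{1},\varepsilon_{2}$ depending only on $K,R_{0},\lambda_{0},\lambda_{1}$.

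\textbf{Where the difficulty lies.} The crux is to exhibit a \emph{single} $\overline{\nu}$, independent of $(t,p)$, which at once neutralises $\mathcal{E}^{+}(\mathcal{Z})$ (as large as $\varepsilon^{-1/2}$, which forces both the splitting $\overline{\nu}=s_{1}e+c_{2}w$ and the near/far dichotomy of $\|\Delta\eta\|$ against $\varepsilon^{1/4}$), generates enough first--order room to beat $h_{K}(\|\Delta\eta\|)$ and the order--$\varepsilon^{3/2}$ growth leftovers, controls the cross term $\varepsilon^{2}s_{1}\langle\mathcal{Z}e,w\rangle$ — comparable to the first--order gain in the intermediate range where $\|\Delta\eta\|$ sits between $\varepsilon^{1/2}$ and $\varepsilon^{1/4}$, and which must be tamed, e.g.\ by decomposing $\Delta\eta$ along the nonnegative and the negative eigenspaces of $\mathcal{Z}$ and steering $w$ into the former — and still obeys $\|\overline{\nu}\|\le m_{1}^{-1/2}\varepsilon^{-1/4}$. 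All of the case analysis and the calibration of the constants $s_{1},c_{2},\varepsilon_{1},\varepsilon_{2}$ live here; once $\overline{q}$ has been taken horizontal the ambient Carnot structure plays no further role, which is precisely why the statement reduces to \cite[Lemma 4.6]{Kasai}.
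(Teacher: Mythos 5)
Your overall plan — take $\overline{q}$ horizontal so the gauge collapses to a Euclidean norm on $\mathbb{R}^{m_1}$, then re-run Kasai's Lemma 4.6 case analysis — is exactly what the paper does; the single observation that higher layers only shrink the admissible set and contribute nothing to $R^\varepsilon$ is correct and is precisely the point the paper flags. However, your specific ansatz $\overline{\nu}=s_1 e + c_2 w$ with a \emph{fixed} $c_2\in[0,m_1^{-1/2})$ and $w$ \emph{along $\Delta\eta$} (not an eigenvector of $\mathcal{Z}=\hat{\mathcal{X}}-\mathcal{X}$) has two genuine gaps that the paper's eigenbasis formulation sidesteps.

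First, you only zero $c_2$ when $\Delta\eta=0$, but the $w$-piece must be dropped whenever $\|\Delta\eta\|\le\varepsilon^{1/4}$. With $c_2>0$ fixed, the $\varepsilon^{-2}$-scaled quadratic term $\tfrac{c_2^2}{2}\langle\mathcal{Z}w,w\rangle$ can be as negative as $-\tfrac{c_2^2}{2}(R_0+\varepsilon^{-1/2})$, while the first-order gain $\varepsilon^{-1}c_2\|\Delta\eta\|$ vanishes as $\|\Delta\eta\|\to0$ at fixed $\varepsilon$; since $\varepsilon_1$ must be chosen uniformly in $(\eta,\mathcal{X})$, the conclusion $\ge -h_K(\varepsilon^{1/4})$ then fails. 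The paper sets every coefficient except the one along $\xi_0$ to zero in the regime $\|\Delta\eta\|\le\varepsilon^{1/4}$ for exactly this reason. Second, your proposed fix for the cross term $\varepsilon^2 s_1 c_2\langle\mathcal{Z}e,w\rangle$ — steering $w$ into the nonnegative eigenspace of $\mathcal{Z}$ — is counterproductive: if $\Delta\eta$ happens to lie in the negative eigenspace, this kills the very first-order gain $\varepsilon^{-1}c_2\langle\Delta\eta,w\rangle$ you rely on. There is no need to steer: when $\|\Delta\eta\|\ge\varepsilon^{1/4}$ the cross and quadratic terms are $O(\varepsilon^{-1/2})$ while the first-order gain is $\ge c_2\varepsilon^{-3/4}$, so a crude bound suffices; but the cleaner route, which the paper takes, is to keep $\overline{\nu}$ entirely in the eigenbasis of $\mathcal{Z}$ (so there are no cross terms at all), split on whether $\vert\langle\Delta\eta,\xi_0\rangle\vert$ beats $\varepsilon^{1/2}/\lambda_1$, and in the residual sub-case pick an eigendirection $\xi_{j_0}$ carrying at least $1/(2(m_1-1))$ of the unit vector $\Delta\eta/\|\Delta\eta\|$ and scale its coefficient as $\lambda_1\varepsilon^{1/4}$ so the quadratic remainder stays $O(1)$ and is dominated by the $O(\varepsilon^{-1/2})$ first-order gain.
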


\begin{proof}
  Assume that $\hat{\eta} \neq \eta$ and $\hat{\mathcal{X}} \neq \mathcal{X}$. Using orthonormal eigenvectores $\xi_0, \xi_1, \ldots, \xi_{m_1-1} \in \mathbb{R}^{m_1}$ of $\mathcal{X} - \hat{\mathcal{X}}$, we can represent $\nu$ with $\Vert \nu \Vert \leq \varepsilon^{-1/4}$ by:
    \begin{equation*}
            \nu= \sum_{i=0}^{m_1-1} s_i \xi_i,
    \end{equation*}
    where $s_i \in \mathbb{R}$ ($i=0, 1, \ldots, m_1-1$) with $s_0^2 + \ldots + s_{m_1-1}^2 \leq \varepsilon^{-1/4}$. In particular, let $\xi_0$ be the unit eigenvector which gives the maximum eigenvalue of $\hat{\mathcal{X}} - \mathcal{X}$. Thus $\varepsilon^{-2}[R^{\varepsilon}(t, p, \overline{q}, \eta, \mathcal{X}) - R^{*, \varepsilon}(t, p, \overline{q}, \hat{\eta}, \hat{\mathcal{X}})]$ is bounded from below by:
    \begin{equation}\label{equationvarepsilon2R}
        \begin{split}
        \varepsilon^{-1} s_0 \langle \hat{\eta} - \eta, \xi_0 \rangle &+ \varepsilon^{-1} \sum_{i=1}^{m_1-1} s_i \langle \hat{\eta} - \eta, \xi_i \rangle+ \dfrac{1}{2} s_0^2 \mathcal{E}\left( \hat{\mathcal{X}} - \mathcal{X} \right)  \\
        &+ \dfrac{1}{2} \sum_{i=1}^{m_1-1} s_i^2 \left\langle \left(\hat{\mathcal{X}} - \mathcal{X}\right) \xi_i, \xi_i \right\rangle + \left[ \mathcal{F} \left(t, p,  \hat{\eta}, \hat{\mathcal{X}} \right) - \mathcal{F} \left( t, p, \eta, \mathcal{X} \right) \right].
        \end{split}
    \end{equation}
    \begin{enumerate}
        \item Assume  $\Vert \hat{\eta} \Vert \geq K^{-1}$ for some $K \in \mathbb{N}$. 
        \begin{enumerate}
            \item If $\Vert \hat{\eta} - \eta \Vert \leq \varepsilon^{1/4}$, then $\Vert \eta \Vert \geq 1/2K$ for all sufficiently small $\varepsilon$. 
           In the case $\mathcal{E}\left(\hat{\mathcal{X}} - \mathcal{X} \right) > 0$, we take $\vert s_0 \vert = \lambda_1$, $s_i = 0$ for $i=1, \ldots, m_1-1$ in  (\ref{equationvarepsilon2R}).  Then (\ref{equationvarepsilon2R}) is rewritten by:
            \begin{equation}\label{equationlemmaR}
            \begin{split}
            \varepsilon^{-1} \lambda_1 \vert \langle \hat{\eta} - \eta, &\xi_0 \rangle \vert + \dfrac{\lambda_1^2}{2} \mathcal{E}^+ \left( \hat{\mathcal{X}}  -\mathcal{X}  \right) + \left[ \mathcal{F} \left( t, p, \hat{\eta}, \hat{\mathcal{X}} \right) - \mathcal{F} \left( t, p, \hat{\eta}, \mathcal{X} \right) \right] + \left[ \mathcal{F} \left( t, p, \hat{\eta}, \mathcal{X} \right) - \mathcal{F} \left(t, p, \eta, \mathcal{X} \right) \right],
            \end{split}
            \end{equation}
            
            where we choose an appropriate sign of $s_0$ so that $s_0 \langle \hat{\eta} - \eta, \xi_0 \rangle$ is non-negative. From \textbf{($\mathcal{F}3$)},  for any $\eta \in \mathbb{R}^{m_1} \backslash \lbrace 0 \rbrace$:
            \begin{equation}\label{ineq1lemmaR}
                \dfrac{\lambda_1^2}{2}\mathcal{E}^+ \left( \mathcal{X} - \hat{\mathcal{X}} \right) + \left[ \mathcal{F} \left( t, p, \hat{\eta}, \hat{\mathcal{X}} \right) - \mathcal{F} \left(t, p,  \hat{\eta}, \mathcal{X} \right) \right] \geq 0.
            \end{equation}
            From \textbf{($\mathcal{F}4$)} and  $\Vert \eta \Vert \geq 1/2K$ it follows: \begin{equation}\label{ineq2lemmaR}
                \begin{split}
                    &\mathcal{F}(t, p, \hat{\eta}, \mathcal{X}) - \mathcal{F}(t, p, \eta, \mathcal{X}) \geq - \omega_{1/2K, R_0}(\varepsilon^{1/4})
                 \end{split}
                \end{equation}
            where $\omega_{1/2K, R_0}$ is a modulus of continuity depending only on $K$ and $R_0$. Thus, from \eqref{ineq1lemmaR}, \eqref{ineq2lemmaR} and \eqref{equationlemmaR} we obtain:
            \begin{equation}\label{ineqlemmaR}
            \varepsilon^{-2}[R^{\varepsilon}(t, p, \overline{q}, \eta, \mathcal{X}) - R^{*, \varepsilon}(t, p, \overline{q}, \hat{\eta}, \hat{\mathcal{X}})] \geq - h_K(\varepsilon^{1/4}),
            \end{equation}
            where $h_K(s) = \omega_{1/2K, R_0}(s)$.
           
            If now  $\mathcal{E}\left(\hat{\mathcal{X}} - \mathcal{X} \right) \leq 0$, we take $s_i=0$ for $i=0, 1, \ldots, m_1 - 1$ in the formula \eqref{equationvarepsilon2R}. Then $\mathcal{F}(t, p, \hat{\eta}, \hat{\mathcal{X}}) \geq \mathcal{F}(t, p, \hat{\eta}, \mathcal{X})$ for any $\eta \in \mathbb{R}^{m_1} \backslash \lbrace 0 \rbrace$ holds, since $\mathcal{F}$ is degenerate elliptic (see Remark \ref{remarkhipotesysofFandH}). From \eqref{ineq2lemmaR}, the inequality \eqref{ineqlemmaR} is derived (with the same modulus $h_K$).

            \item If $\Vert \hat{\eta} - \eta \Vert \geq \varepsilon^{1/4}$, then:
            \begin{equation}\label{etarepresentation}
                \dfrac{\hat{\eta} - \eta}{\Vert \hat{\eta} - \eta \Vert} = \sum\limits_{i=0}^{m_1 - 1} r_i \xi_i,
            \end{equation}
            where $r_i \in \mathbb{R}$ with $r_0^2 + r_1^2 + \ldots + r_{m_1 - 1}^2 = 1$. Let us divide this case into two parts. 
           
            Suppose first that  $\vert \langle \hat{\eta} - \eta, \xi_0 \rangle \vert \geq  \dfrac{\varepsilon^{1/2}}{\lambda_1}$. Then if  $\mathcal{E}(\hat{\mathcal{X}} - \mathcal{X}) > 0$,  we choose $s_i$ so that $\vert s_0 \vert = \lambda_1$, $s_i = 0$ ($i= 1, \ldots, m_1 - 1$) and obtain a formula similar to \eqref{equationlemmaR}:
                \begin{equation*}
                \begin{split}
                    \varepsilon^{-1} \lambda_1 &\vert \langle \hat{\eta} - \eta, \xi_0 \rangle \vert + \dfrac{\lambda_1^2}{2} \mathcal{E}^+ \left(  \hat{\mathcal{X}} -  \mathcal{X}\right) \\
                    &+ \left[ \mathcal{F} \left(t, p,  \eta, \hat{\mathcal{X}} \right) - \mathcal{F} \left(t, p,  \eta, \mathcal{X} \right) \right] + \left[ \mathcal{F} \left(t, p,  \hat{\eta}, \hat{\mathcal{X}} \right) - \mathcal{F} \left(t, p, \eta, \hat{\mathcal{X}} \right)\right].
                \end{split}
                \end{equation*}
               Similarly, by  \textbf{($\mathcal{F}3$)}
             and Remark \ref{remarkhipotesysofFandH}, we have for all $\varepsilon \leq \varepsilon(R_0, \lambda_0, \lambda_1)$:

            \begin{equation*}
            \begin{split}
            \varepsilon^{-2}[R^{\varepsilon}(t, p, \overline{q}, \eta, \mathcal{X}) &- R^{*, \varepsilon}(t, p, \overline{q}, \hat{\eta}, \hat{\mathcal{X}})] \\
            &\geq \varepsilon^{-1/2} - 2C(1 + \Vert \hat{\mathcal{X}} \Vert) \\
            &\geq \varepsilon^{-1/4} - 2C(1 + R_0) \\         
            &\geq 0.
            \end{split}
            \end{equation*}

           If $\mathcal{E}( \hat{\mathcal{X}}- \mathcal{X}) \leq 0$, we choose $s_i$ so that $\vert s_0 \vert = \varepsilon^{1/4} \lambda_1$, $s_i=0$ ($i=1, \ldots, m_1 - 1$), and obtain a similar formula  to \eqref{equationlemmaR}. Hence, there is $\varepsilon_1(R_0, K, \lambda_0, \lambda_1)$ so that:

            \begin{equation*}
            \begin{split}
                \varepsilon^{-2}[R^{\varepsilon}(t, p, \overline{q}, &\eta, \mathcal{X}) - R^{*, \varepsilon}(t, p, \overline{q}, \hat{\eta}, \hat{\mathcal{X}})] \\
                &\geq \varepsilon^{-1/2} + \dfrac{\lambda_1^2}{2}\varepsilon^{1/2} \mathcal{E}(\hat{\mathcal{X}} - \mathcal{X})  - 2C(1 + \Vert \hat{\mathcal{X}} \Vert) \\
                &\geq \varepsilon^{-1/4} - \dfrac{\lambda_1^2}{2} \left( \varepsilon^{1/2}R_0 + 1 \right) - 2C(1 + R_0) \\
                &\geq \varepsilon^{-1/4} - \lambda_1^2 - 2C(1 + R_0)\\
                &\geq 0,
            \end{split}
            \end{equation*}

             We consider now the case $\vert \langle \hat{\eta} - \eta, \xi_0 \rangle \vert \leq  \dfrac{\varepsilon^{1/2}}{\lambda_1}$. Then from \eqref{etarepresentation}:
            \begin{equation}\label{r cero}
                \vert r_0 \vert = \Big\vert \dfrac{\langle \hat{\eta} - \eta, \xi_0 \rangle}{\Vert \hat{\eta} - \eta \Vert} \Big\vert \leq  \dfrac{\varepsilon^{1/4}}{\lambda_1} =: c_0 \varepsilon^{1/4}.
            \end{equation}
            Since $r_0^2 + r_1^2 + \ldots + r_{m_1 - 1}^2 = 1$, we have the inequality:
            \begin{equation*}
                1 - c_0^2 \varepsilon^{1/2} \leq  1 - r_0^2 = r_1^2 + r_2^2 + \ldots + r_{m_1 -1}^2 \leq \vert r_1 \vert + \vert r_2 \vert + \ldots + \vert r_{m_1 -1} \vert,
            \end{equation*}
            where we take $\varepsilon$ so that $c_0^2 \varepsilon^{1/2} < 1/2$. This inequality implies that there exists at least one number $j_0$ such that:
            \begin{equation*}
                \vert r_{j_0} \vert \geq \dfrac{1 - c_0^2 \varepsilon^{1/2}}{m_1 - 1} > \dfrac{1}{2(m_1 - 1)}.
            \end{equation*}
            Now we take $s_i$ so that $s_i = 0$ ($i \neq 0, j_0$) in  \eqref{equationvarepsilon2R} to get:
            \begin{equation}\label{equationvarepsilon2Rj0}
            \begin{split}
                \varepsilon^{-1} s_0 \langle \hat{\eta} - \eta, \xi_0 \rangle &+ \varepsilon^{-1} s_{j_0} \langle \hat{\eta} - \eta, \xi_{j_0} \rangle + \dfrac{s_0^2}{2}  \mathcal{E}\left( \hat{\mathcal{X}} - \mathcal{X} \right) + \dfrac{s_{j_0}^2}{2} \left\langle \left( \hat{\mathcal{X}} - \mathcal{X} \right) \xi_{j_0}, \xi_{j_0} \right\rangle \\
                &+ \left[ \mathcal{F} \left(t, p, \eta, \hat{\mathcal{X}} \right) - \mathcal{F} \left( t, p, \eta, \mathcal{X} \right) \right] + \left[ \mathcal{F} \left( t, p, \hat{\eta}, \hat{\mathcal{X}} \right) - \mathcal{F} \left(t, p,  \eta, \hat{\mathcal{X}} \right) \right].
            \end{split}
            \end{equation}
            If $\mathcal{E}(\hat{\mathcal{X}}-\mathcal{X}) > 0$, we chose $\vert s_0 \vert = \lambda_1$ with $s_0 \langle \hat{\eta} - \eta, \xi_0 \rangle \geq 0$. In addition, take $\vert s_{j_0} \vert = \lambda_1 \varepsilon^{1/4}$ so that  $s_{j_0} \langle \hat{\eta} - \eta, \xi_{j_0} \rangle \geq 0$. Then, using also \eqref{r cero},  \eqref{equationvarepsilon2Rj0} is rewritten as:
                \begin{equation*}
                \begin{split}
                    \varepsilon^{-1} \lambda_1 \vert r_0 \vert \Vert \hat{\eta} - \eta \Vert &+ \varepsilon^{-3/4} \lambda_1 \vert r_{j_0} \vert \Vert \hat{\eta} - \eta \Vert+ \dfrac{\lambda_1^2}{2}  \mathcal{E}\left( \hat{\mathcal{X}} - \mathcal{X} \right) + \dfrac{\lambda_1^2}{2} \varepsilon^{1/2} \left\langle \left( \hat{\mathcal{X}} - \mathcal{X} \right) \xi_{j_0}, \xi_{j_0} \right\rangle \\
                    &+ \left[ \mathcal{F} \left(t, p,  \eta, \hat{\mathcal{X}} \right) - \mathcal{F} \left(t, p,  \eta, \mathcal{X} \right) \right] + \left[ \mathcal{F} \left(t, p,  \hat{\eta}, \hat{\mathcal{X}} \right) - \mathcal{F} \left(t, p,  \eta, \hat{\mathcal{X}} \right) \right].
                \end{split}
                \end{equation*}  
               Appealing to \textbf{($\mathcal{F}3$)} and  Remark \ref{remarkhipotesysofFandH},  there exists $\epsilon_1(R_0, K, \lambda_0, \lambda_1)$ so that:
                \begin{equation*}
                \begin{split}
                    \varepsilon^{-2}[R^{\varepsilon}(\overline{q}, &\eta, \mathcal{X}) - R^{*, \varepsilon}(\overline{q}, \hat{\eta}, \hat{\mathcal{X}})] \\
                    &\geq \varepsilon^{-3/4} \lambda_1 \vert r_{j_0} \vert \Vert \hat{\eta} - \eta \Vert + \dfrac{\lambda_1^2}{2} \varepsilon^{1/2} \left\langle \left( \hat{\mathcal{X}} - \mathcal{X} \right) \xi_{j_0}, \xi_{j_0} \right\rangle + \left[ \mathcal{F} \left(p, t,  \hat{\eta}, \hat{\mathcal{X}} \right) - \mathcal{F} \left(t, p,  \eta, \hat{\mathcal{X}} \right) \right]\\
                    &\geq \dfrac{\lambda_1 \varepsilon^{-1/2}}{2(m_1 - 1)} - \dfrac{\lambda_1^2}{2}\varepsilon^{1/2} \Vert  \hat{\mathcal{X}} - \mathcal{X} \Vert - 2C(1 + \Vert \hat{\mathcal{X}} \Vert)\geq 0.
                \end{split}
                \end{equation*}

               In the case $\mathcal{E}(\hat{\mathcal{X}}- \mathcal{X} ) \leq 0$, take $s_0= 0$ and $\vert s_{j_0} \vert = \lambda_1 \varepsilon^{1/4}$ so that $s_{j_0} \langle \hat{\eta} - \eta, \xi_{j_0} \rangle \geq 0$. Then, as in the previous case we have: 
                \begin{equation*}
                \begin{split}
                    \varepsilon^{-3/4} \lambda_1 &\vert r_{j_0} \vert \Vert \hat{\eta} - \eta \Vert + \dfrac{\lambda_1^2}{2} \varepsilon^{1/2} \left\langle \left( \hat{\mathcal{X}} - \mathcal{X} \right) \xi_{j_0}, \xi_{j_0} \right\rangle \\
                    &+ \left[ \mathcal{F} \left(t, p,  \eta, \hat{\mathcal{X}} \right) - \mathcal{F} \left(t, p,  \eta, \mathcal{X} \right) \right] + \left[ \mathcal{F} \left( t, p, \hat{\eta}, \hat{\mathcal{X}} \right) - \mathcal{F} \left( t, p, \eta, \hat{\mathcal{X}} \right) \right]\geq 0.
                \end{split}
                \end{equation*}
                
            \end{enumerate}

        In particular, since $\Vert \hat{\eta} \Vert \geq K^{-1}$, we see $\mathcal{F}(t, p, \hat{\eta}, \mathcal{X}) = \mathcal{F}^*(t, p, \hat{\eta}, \mathcal{X})$. Consequently if we set $\varepsilon_1 = \varepsilon_1(K, R_0, \lambda_0, \lambda_1)$, then the formula \eqref{ineqLemmaRpart1} holds with $h_K(s)= \omega_{R_0, 1/K}(s)$.
        
        \item For the case $\Vert \hat{\eta} \Vert \leq K^{-1}$ for $K \in \mathbb{N}$,  we argue as in \textbf{Case 1} to derive the  estimate \eqref{ineqLemmaRpart2}.
        
    \end{enumerate}

Finally, we consider the case of $\eta = \hat{\eta}$ or $\mathcal{X} = \hat{\mathcal{X}}$ for $\hat{\eta} \in \mathbb{R}^{m_1} \backslash \lbrace 0 \rbrace$. We can choose sequences $\lbrace \eta_k \rbrace \subset \mathbb{R}^{m_1} \backslash \lbrace 0 \rbrace$ and $\lbrace \mathcal{X}_n \rbrace \subset \mathcal{S}^{m_1}(\mathbb{R})$ such that $\eta_k \neq \eta$, $\eta_k \to \hat{\eta}$, $\mathcal{X}_k \neq \mathcal{X}$, $\mathcal{X}_n \to \hat{\mathcal{X}}$ as $k, n \to \infty$, respectively. Now let us set $\overline{q}_{k, n} := \overline{q}(\varepsilon, \eta_k, \hat{\eta}, \mathcal{X}_n, \hat{\mathcal{X}})$ where $\overline{q}(\varepsilon, \eta_k, \hat{\eta}, \mathcal{X}_n, \hat{\mathcal{X}})$ satisfies the inequality \eqref{ineqLemmaRpart1} or \eqref{ineqLemmaRpart2}. As $\vert \overline{q}_{k, n} \vert_\mathbb{G} \leq \varepsilon^{-1/4}$, by compactness, the conclusion follows by taking   as $k \to \infty$ and then $n \to \infty$ in  \eqref{ineqLemmaRpart1} and  \eqref{ineqLemmaRpart2}).
\end{proof}

\end{document}